\documentclass[11pt,a4paper,leqno]{amsart}
\usepackage{calrsfs}  


\setlength{\parskip}{.6ex plus .2ex minus .2ex}
\setlength{\emergencystretch}{1em}
\setlength{\mathsurround}{1pt}


\swapnumbers
\numberwithin{equation}{section}


\newcommand{\p}{\partial}

\renewcommand{\d}{\mathrm{d}}

\renewcommand{\epsilon}{\varepsilon}
\newcommand{\la}{\langle}
\newcommand{\ra}{\rangle}
\newcommand{\lla}{\langle\!\!\langle\!\!\langle}
\newcommand{\rra}{\rangle\!\!\rangle\!\!\rangle}
\newcommand{\vvbar}{\,|\!\!|\!\!|\!\!|\,}

\renewcommand{\P}{\mathbb{P}}
\newcommand{\E}{\mathbb{E}}
\newcommand{\Cov}{\mathbb{C}\mathrm{ov}}
\newcommand{\Var}{\mathbb{V}\mathrm{ar}}
\newcommand{\1}{1}
\newcommand{\F}{\mathcal{F}}

\newcommand{\Tr}{\mathrm{Tr}}


\newcommand{\g}{\mathbf{g}}
\newcommand{\G}{\mathbf{G}}
\newcommand{\y}{\mathbf{y}}
\newcommand{\0}{\mathbf{0}}
\newcommand{\SSigma}{\mathbf{\Sigma}}
\newcommand{\mmu}{\boldsymbol{\mu}}
\newcommand{\x}{\mathbf{x}}


\newcommand{\I}{\mathcal{I}}
\renewcommand{\L}{\mathcal{L}}
\newcommand{\LLambda}{\mathrm{\Lambda}}

\newcommand{\PP}{\mathrm{P}}
\newcommand{\KK}{\mathrm{K}}
\newcommand{\II}{\mathrm{I}}
 
\begin{document}


\theoremstyle{definition}
\newtheorem{dfn}[equation]{Definition}
\newtheorem{ass}[equation]{Assumption}
\theoremstyle{plain}
\newtheorem{thm}[equation]{Theorem}
\newtheorem{pro}[equation]{Proposition}
\newtheorem{cor}[equation]{Corollary}
\newtheorem{lmm}[equation]{Lemma}
\theoremstyle{definition}
\newtheorem{rem}[equation]{Remark}
\newtheorem{exa}[equation]{Example}



 
\title{Generalized Gaussian Bridges}
\date{\today}

\author[Sottinen, T.]{Tommi Sottinen}
\address{Tommi Sottinen\\
Department of Mathematics and Statistics\\ 
University of Vaasa
P.O.Box 700\\ 
FIN-65101 Vaasa\\ 
Finland}
\email{tommi.sottinen@uwasa.fi}

\author[Yazigi, A.]{Adil Yazigi}
\address{Adil Yazigi\\
Department of Mathematics and Statistics\\ 
University of Vaasa
P.O.Box 700\\ 
FIN-65101 Vaasa\\ 
Finland}
\email{adil.yazigi@uwasa.fi}

\begin{abstract}
A generalized bridge is the law of a stochastic process that is conditioned on 
N linear functionals of its path.  We consider two types of representations
of such bridges: orthogonal and canonical.  

The orthogonal representation is constructed from the entire path of the 
underlying process. Thus, future knowledge of the path is needed. The orthogonal
representation is provided for any continuous Gaussian process.

In the canonical representation the filtrations and the linear spaces generated by 
the bridge process and the underlying process coincide. Thus, no future 
information of the underlying process is needed.  Also, in the semimartingale case 
the canonical bridge representation is related to the enlargement of filtration 
and semimartingale decompositions.  The canonical representation is provided for 
the so-called prediction-invertible Gaussian processes.  All martingales are
trivially prediction-invertible. A typical non-semimartingale example of a
prediction-invertible Gaussian process is the fractional Brownian motion.

We apply the canonical bridges to insider trading.
\end{abstract}

\thanks{
A. Yazigi was funded by the Finnish Doctoral Programme
in Stochastics and Statistics and by the Finnish Cultural Foundation.}

\maketitle

{\small
\noindent\textbf{Mathematics Subject Classification (2010):} 
60G15, 60G22, 91G80.

\noindent\textbf{Keywords}: 
Canonical representation,
enlargement of filtration,
fractional Brownian motion,
Gaussian process, 
Gaussian bridge, 
Hitsuda representation,
insider trading,
orthogonal representation,
prediction-invertible process,
Volterra process.
}


 
\section{Introduction}

Let $X=(X_t)_{t\in[0,T]}$ be a continuous Gaussian process with positive 
definite covariance function $R$, mean function $m$ of bounded variation, and 
$X_0=m(0)$.  We consider the conditioning, or bridging, of $X$ on $N$ linear 
functionals $\G_T=[G_T^i]_{i=1}^N$ of its paths: 
\begin{equation}\label{eq:awints}
\G_T(X) = \int_0^T \g(t)\, \d X_t
= \left[\int_0^T g_i(t)\, \d X_t\right]_{i=1}^N.
\end{equation} 
We assume, without any loss of generality, that the functions $g_i$ are 
linearly independent. Indeed, if this is not the case then the linearly
dependent, or redundant, components of $\g$ can simply be removed from the 
conditioning (\ref{eq:conditioning}) below without changing it.

The integrals in the conditioning (\ref{eq:awints}) are the so-called abstract
Wiener integrals (see Definition \ref{dfn:AWI} later).  
The abstract Wiener integral $\int_0^T g(t)\, \d X_t$ will be well-defined for
functions or generalized functions $g$ that can be approximated by step functions
in the inner  product $\lla \cdot,\cdot \rra$ defined by the covariance $R$ of
$X$ by bilinearly extending the relation $\lla \1_{[0,t)},\1_{[0,s)}\rra = R(t,s)$.
This means that the integrands $g$ are equivalence classes of Cauchy sequences 
of step functions in the norm $\vvbar\cdot\vvbar$ induced by the inner product
$\lla\cdot,\cdot\rra$. Recall that for the case of Brownian motion we have
$R(t,s) =t\wedge s$. Therefore, for the Brownian motion, the equivalence classes
of step functions is simply the space $L^2([0,T])$.   

Informally, the generalized Gaussian bridge $X^{\g;\y}$ is (the law of) the 
Gaussian process $X$ conditioned on the set
\begin{equation}\label{eq:conditioning}
\left\{\int_0^T \g(t)\,\d X_t=\y\right\} =
\bigcap_{i=1}^N\left\{\int_0^T g_i(t)\, \d X_t = y_i\right\}.
\end{equation}
The rigorous definition is given in Definition \ref{dfn:bridges} later.

For the sake of convenience, we will work on the canonical filtered
probability space $(\Omega,\F, \mathbb{F}, \P)$, where $\Omega=C([0,T])$, $\F$
is the Borel  $\sigma$-algebra on $C([0,T])$ with respect to the supremum norm,
and $\P$ is the Gaussian measure  corresponding to the Gaussian coordinate 
process $X_t(\omega)=\omega(t)$: $\P=\P[X\in \,\cdot\,]$.  The filtration 
$\mathbb{F}=(\F_t)_{t\in[0,T]}$ is the intrinsic filtration of the coordinate 
process $X$ that is  augmented with the null-sets and made right-continuous.

\begin{dfn}\label{dfn:bridges}
The \emph{generalized bridge measure} $\P^{\g;\y}$ is the regular conditional law
$$
\P^{\g;\y} =
\P^{\g;\y}\left[X\in \, \cdot\, \right] = 
\P\left[X\in \ \cdot\ \bigg| \int_0^T \g(t)\, \d X_t=\y\right].
$$
A \emph{representation of the generalized Gaussian bridge} 
is any process $X^{\g;\y}$ satisfying
$$
\P\left[X^{\g;\y}\in\,\cdot\,\right] =
\P^{\g;\y}\left[X\in \, \cdot\, \right]
=
\P\left[ X \in\ \cdot\  \bigg| \int_0^T \g(t)\, \d X_t=\y\right].
$$
\end{dfn}  

Note that the conditioning on the $\P$-null-set (\ref{eq:conditioning}) in 
Definition \ref{dfn:bridges} above is not a problem, since the canonical space 
of continuous processes is a Polish space and all Polish spaces are Borel spaces
and thus admit regular conditional laws, cf. \cite{Kallenberg} Theorem A1.2 and
Theorem 6.3.
Also, note that as a \emph{measure} $\P^{\g;\y}$ the generalized Gaussian  
bridge is unique, but it has several different \emph{representations}
$X^{\g;\y}$. Indeed, for any representation of the bridge one can combine it 
with any $\P$-measure-preserving transformation to get a new representation. 

In this paper we provide two different representations for $X^{\g;\y}$.  The 
first representation, given by Theorem \ref{thm:ortho_decomp}, is called the 
\emph{orthogonal representation}. This representation is a simple consequence 
of orthogonal decompositions of Hilbert spaces associated with Gaussian 
processes and it can be constructed for any continuous Gaussian process
for any conditioning functionals. The second representation, given by Theorem 
\ref{thm:canonical-pred-inv}, is called the \emph{canonical representation}.  
This representation is more interesting but also requires more assumptions.  
The canonical representation is dynamically invertible in the sense that the 
linear spaces  $\L_t(X)$ and $\L_t(X^{\g;\y})$ (see Definition 
\ref{dfn:linear_space} later) generated by the process $X$ and its bridge 
representation $X^{\g;\y}$ coincide for all times $t\in[0,T)$.  
This means that at every time point $t\in[0,T)$ the bridge and the underlying
process can be constructed from each others without knowing the future-time
development of the underlying process or the bridge. 
A typical example of a non-semimartingale Gaussian process for which we can
provide the canonically represented generalized bridge is the fractional Brownian 
motion. 

The canonically represented  
bridge $X^{\g;\y}$ can be interpreted as the original process $X$ with an added 
``information drift'' that bridges the process at the final time $T$.  This 
dynamic drift interpretation should turn out to be useful in applications.  We 
give one such application in connection to insider trading in Section 5.  This 
application is, we must admit, a bit classical.

On earlier work related to bridges, we would like to mention first Alili
\cite{Alili}, Baudoin \cite{Baudoin}, Baudoin and Coutin \cite{BaudoinCoutin} and 
Gasbarra et al. \cite{GasbarraSottinenValkeila}. In \cite{Alili}  generalized 
Brownian bridges were considered. It is our opinion that our article extends 
\cite{Alili} considerably, although we do not consider the ``non-canonical 
representations'' of \cite{Alili}. 
Indeed, Alili \cite{Alili} only considered Brownian motion. Our investigation
extends to a large class of non-semimartingale Gaussian processes.  Also, Alili
\cite{Alili} did not give the canonical representation for bridges, i.e. the
solution to the equation (\ref{eq:can brg 1}) was not given.  We solve the 
equation (\ref{eq:can brg 1}) in (\ref{brg mg}).
The article \cite{Baudoin} is, in a sense, more general than this 
article, since we condition on fixed values $\y$, but in \cite{Baudoin} the 
conditioning is on a probability law. However, in \cite{Baudoin} only 
the Brownian  bridge was considered.  In that sense our approach is more 
general. 
In \cite{BaudoinCoutin} and \cite{GasbarraSottinenValkeila} (simple) bridges
were studied in a similar Gaussian setting as in this article. In this article
we generalize the results of and \cite{BaudoinCoutin} and 
\cite{GasbarraSottinenValkeila} to generalized bridges. 
Second, we would like to mention the articles
\cite{CampiCetinDanilova,ChaumontUribeBravo,GasbarraValkeilaVostrikova,
HoyleHugstonMarcrina} that deal with Markovian and L\'evy bridges.

This paper is organized as follows. In Section 2 we recall some Hilbert spaces 
related to Gaussian processes. In Section 3 we give the orthogonal 
representation for the generalized bridge in the general Gaussian setting.  
Section 4 deals with the canonical bridge representation.  First we give the 
representation for Gaussian martingales.  Then we introduce the so-called 
prediction-invertible processes and develop the canonical bridge representation 
for them.  Then we consider invertible Gaussian Volterra processes, such as the 
fractional Brownian motion, as examples of prediction-invertible processes.  
Finally, in Section 5 we apply the bridges to insider trading.  Indeed, the bridge 
process can be understood from the initial enlargement of filtration point of view.
For more information on the enlargement of filtrations we refer to \cite{Chaleyat-MaurelJeulin} and \cite{JeulinYor}.


\section{Abstract Wiener Integrals and Related Hilbert Spaces}
\label{sect:AWI}

In this section $X=(X_t)_{t\in[0,T]}$ is a continuous (and hence 
separable) Gaussian process with positive definite
covariance $R$, mean zero and $X_0=0$.

Definitions \ref{dfn:linear_space} and \ref{dfn:AWI-space} below give us 
two central separable Hilbert spaces connected to separable Gaussian 
processes.

\begin{dfn}\label{dfn:linear_space}
Let $t\in[0,T]$. The \emph{linear space} $\L_t(X)$ is the Gaussian closed  
linear subspace of $L^2(\Omega,\F,\P)$ generated by the random variables 
$X_s$, $s\le t$, i.e. $\L_t(X) = \overline{\mathrm{span}}\{ X_s; s\le t\}$,
where the closure is taken in $L^2(\Omega,\F,\P)$.
\end{dfn}

The linear space is a Gaussian Hilbert space with the inner product 
$\Cov[\cdot , \cdot]$. Note that since $X$ is continuous, $R$ is also 
continuous, and hence $\L_t(X)$ is separable, and any orthogonal basis 
$(\xi_n)_{n=1}^\infty$ of $\L_t(X)$ is a collection of independent 
standard normal random variables. (Of course, since we chose to work on 
the canonical space, $L^2(\Omega,\F,\P)$ is itself a separable Hilbert 
space.)

\begin{dfn}\label{dfn:AWI-space}
Let $t\in[0,T]$. The \emph{abstract Wiener integrand space} 
$\LLambda_t(X)$ is the completion of the linear span of the indicator 
functions $\1_s := \1_{[0,s)}$, $s\le t$, under the inner product
$\lla \cdot,\cdot\rra$ extended bilinearly from the relation
$$
\lla \1_s,\1_u\rra = R(s,u).
$$
\end{dfn}

The elements of the abstract Wiener integrand space are equivalence 
classes of Cauchy sequences $(f_n)_{n=1}^\infty$ of piecewise constant 
functions. The equivalence of $(f_n)_{n=1}^\infty$ and 
$(g_n)_{n=1}^\infty$ means that 
$$
\vvbar f_n-g_n\vvbar \to 0, \mbox{ as } n\to\infty,
$$
where $\vvbar\cdot\vvbar = \sqrt{\lla \cdot,\cdot\rra}$.

\begin{rem}\label{rem:AWI-space}
\begin{enumerate}
\item
The elements of $\LLambda_t(X)$ cannot in general be identified with 
functions as pointed out e.g. by Pipiras and Taqqu \cite{PipirasTaqqu}
for the case of fractional Brownian motion with Hurst index
$H>1/2$. 
However, if $R$ is of bounded variation one can identity the function
space $|\LLambda_t|(X)\subset \LLambda_t(X)$:
$$
|\LLambda_t|(X) = \left\{ f\in\mathbb{R}^{[0,t]}\,;\,
\int_0^t\!\!\!\int_0^t |f(s)f(u)| \, |R|(\d s,\d u) <\infty\right\}.
$$
\item
While one may want to interpret that
$\LLambda_s(X)\subset \LLambda_t(X)$ for $s\le t$ it may happen that 
$f\in \LLambda_t(X)$, but $f\1_s\not\in\LLambda_s(X)$. Indeed, it may be 
that $\vvbar f\1_s\vvbar > \vvbar f\vvbar$. See Bender and Elliott 
\cite{BenderElliott} for an example in the case of fractional Brownian motion. 
\end{enumerate}
\end{rem}

The space $\LLambda_t(X)$ is isometric to $\L_t(X)$.  Indeed, the relation
\begin{equation}\label{eq:AWI-iso}
\I^X_t[\1_s] := X_s, \quad s\le t,
\end{equation}
can be extended linearly into an isometry from $\LLambda_t(X)$ onto 
$\L_t(X)$.  

\begin{dfn}\label{dfn:AWI}
The isometry $\I^X_t:\LLambda_t(X)\to\L_t(X)$ extended from the
relation (\ref{eq:AWI-iso}) is the 
\emph{abstract Wiener integral}. We denote
$$
\int_0^t f(s)\, \d X_s := \I^X_t[f].
$$
\end{dfn}

Let us end this section by noting that the abstract Wiener integral and the
linear spaces are now connected as
\begin{eqnarray*}
\L_t(X) &=& \left\{ \I_t[f]\,;\, f\in \Lambda_t(X) \right\} 
\end{eqnarray*}
In the special case of the Brownian motion this relation reduces to the 
well-known It\^o isometry
\begin{eqnarray*}
\L_t(W) &=& \left\{ \int_0^t f(s)\, \d W_s \,;\, f\in L^2([0,t]) \right\}. 
\end{eqnarray*}


\section{Orthogonal Generalized Bridge Representation}
\label{sect:orthogonal}

Denote by $\lla \g \rra$ the matrix
$$
{\lla \g \rra}_{ij} := \lla g_i,g_j\rra 
:= 
\Cov\left[\int_0^T g_i(t)\, \d X_t \,,\, \int_0^T g_j(t)\, \d X_t\right]. 
$$

Note that $\lla\g\rra$ does not depend on the mean of $X$ nor on the 
conditioned values $\y$: 
$\lla \g\rra$ depends only on the conditioning functions  
$\g=[g_i]_{i=1}^N$ and the covariance $R$.  Also, since $g_i$'s are linearly 
independent and $R$ is positive definite, the matrix ${\lla \g \rra}$ is 
invertible.

\begin{thm}\label{thm:ortho_decomp}
The generalized Gaussian bridge $X^{\g;\y}$ can be represented as
\begin{equation}\label{eq:ortho_bridge}
X^{\g;\y}_t =  X_t - 
{\lla \1_t,\g\rra}^\top{\lla \g\rra}^{-1}
\left(\int_0^T \g(u)\, \d X_u-\y\right).
\end{equation}
Moreover, $X^{\g;\y}$ is a Gaussian process with
\begin{eqnarray*} 
\E\left[X^{\g;\y}_t\right] &=& 
m(t) - {\lla \1_t,\g\rra}^\top{\lla \g\rra}^{-1}
\left(\int_0^T \g(u)\, \d m(u)-\y\right), \\
\Cov\left[X^{\g;\y}_t,X^{\g;\y}_s\right] &=&
\lla \1_t,\1_s\rra -
{\lla \1_t,\g\rra}^\top{\lla \g\rra}^{-1}
\lla \1_s,\g\rra.
\end{eqnarray*} 
\end{thm}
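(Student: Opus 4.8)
The plan is to realize \eqref{eq:ortho_bridge} as the orthogonal (Gaussian) projection decomposition of $X$ relative to the conditioning vector, and then to upgrade uncorrelatedness to independence, as one may for jointly Gaussian families. Write $\boldsymbol{\xi} := \int_0^T \g(u)\,\d X_u$ for the $\mathbb{R}^N$-valued conditioning vector and decompose $X = m + \tilde X$ with $\tilde X$ the centered process, so that $\boldsymbol\xi = \int_0^T \g\,\d m + \int_0^T\g\,\d\tilde X$, the first a Riemann--Stieltjes integral (finite since $m$ is of bounded variation) and the second the mean-zero abstract Wiener integral of Section~\ref{sect:AWI}. Since the right-hand side of \eqref{eq:ortho_bridge} is a continuous affine image of the Gaussian process $X$, it is automatically Gaussian; the substance is to identify its law with the regular conditional law $\P^{\g;\y}$, after which the moment formulas drop out by linearity.

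First I would record the identity that drives everything: by the Wiener isometry $\I^X_T$ of Definition~\ref{dfn:AWI}, since $\tilde X_t = \I^X_T[\1_t]$ and $\int_0^T g_i\,\d\tilde X = \I^X_T[g_i]$, one has $\Cov[X_t,\boldsymbol\xi] = \lla\1_t,\g\rra^\top$ and $\Cov[\boldsymbol\xi] = \lla\g\rra$, the latter invertible as noted before the theorem. Define the residual process $Z_t := X_t - \lla\1_t,\g\rra^\top\lla\g\rra^{-1}\boldsymbol\xi$. A one-line matrix computation gives $\Cov[Z_t,\boldsymbol\xi] = \lla\1_t,\g\rra^\top - \lla\1_t,\g\rra^\top\lla\g\rra^{-1}\lla\g\rra = 0$ for every $t$. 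Because $(Z_t)_{t\in[0,T]}$ and $\boldsymbol\xi$ are jointly Gaussian, this vanishing covariance promotes to genuine independence of the whole process $Z$ from $\boldsymbol\xi$.

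With this independence in hand, the identification of laws is the conceptual heart of the argument. Writing $X_t = Z_t + \lla\1_t,\g\rra^\top\lla\g\rra^{-1}\boldsymbol\xi$ exhibits $X$ as a measurable function of the independent pair $(Z,\boldsymbol\xi)$, so a regular version of the law of $X$ given $\boldsymbol\xi=\y$ is obtained by freezing $\boldsymbol\xi$ at $\y$ while leaving the law of $Z$ untouched; this is exactly the law of $Z_t + \lla\1_t,\g\rra^\top\lla\g\rra^{-1}\y = X_t - \lla\1_t,\g\rra^\top\lla\g\rra^{-1}(\boldsymbol\xi-\y)$, i.e.\ the content of \eqref{eq:ortho_bridge}, so by Definition~\ref{dfn:bridges} this process is a bona fide bridge representation. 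I expect this substitution step to be the main obstacle: it must be justified on the Polish path space $C([0,T])$ using the regular conditional distributions invoked after Definition~\ref{dfn:bridges}. Concretely, independence of $Z$ and $\boldsymbol\xi$ turns the disintegration of $\P$ along $\boldsymbol\xi$ into a product, so that for $\mathrm{Law}(\boldsymbol\xi)$-almost every $\y$ the conditional law of $X$ is the law of $Z$ shifted by the deterministic vector $\lla\1_\cdot,\g\rra^\top\lla\g\rra^{-1}\y$; affine dependence on $\y$ of both sides then extends the identity beyond the exceptional null set.

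Finally the moment formulas follow by linearity. For the mean, $\E[X_t]=m(t)$ and $\E[\boldsymbol\xi]=\int_0^T\g\,\d m$ give $\E[X^{\g;\y}_t] = m(t) - \lla\1_t,\g\rra^\top\lla\g\rra^{-1}\bigl(\int_0^T\g\,\d m - \y\bigr)$. For the covariance, the deterministic shift by $\y$ does not affect second moments, so $\Cov[X^{\g;\y}_t,X^{\g;\y}_s]=\Cov[Z_t,Z_s]$; expanding $Z$ and using $\Cov[X_t,X_s]=R(t,s)=\lla\1_t,\1_s\rra$, $\Cov[X_t,\boldsymbol\xi]=\lla\1_t,\g\rra^\top$ and $\Cov[\boldsymbol\xi]=\lla\g\rra$, the three cross-and-quadratic terms collapse (with net coefficient $-2+1=-1$) to $\lla\1_t,\1_s\rra - \lla\1_t,\g\rra^\top\lla\g\rra^{-1}\lla\1_s,\g\rra$, precisely the asserted expressions.
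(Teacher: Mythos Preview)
Your proof is correct and follows essentially the same orthogonal-projection idea as the paper, which simply cites the standard multivariate Gaussian conditioning formulas (referring to Shiryaev) for the conditional mean and covariance and then observes that the process in \eqref{eq:ortho_bridge} matches them. Your version is more self-contained: you explicitly carry out the projection-and-independence argument that underlies those cited formulas rather than invoking them as a black box, but the substance is the same.
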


\begin{proof}
It is well-known (see, e.g., \cite[p. 304]{Shiryaev}) from the theory of 
multivariate Gaussian distributions that conditional distributions are Gaussian 
with
\begin{eqnarray*}
\lefteqn{\E\left[X_t\bigg| \int_0^T \g(u)\d X_u=\y\right]}\\
&=& m(t) +{ \lla \1_t,\g\rra}^\top{\lla \g\rra}^{-1}
\left(\y-\int_0^T \g(u)\, \d m(u)\right), \\
\lefteqn{\Cov\left[X_t,X_s\bigg| \int_0^T \g(u)\, \d X_u=\y\right] }\\
&=&
\lla \1_t,\1_s\rra - {\lla \1_t,\g\rra}^\top{\lla \g\rra}^{-1}
\lla \1_s,\g\rra.
\end{eqnarray*}
The claim follows from this.
\end{proof}

\begin{cor}\label{cor:m-x}
Let $X$ be a centered Gaussian process with $X_0=0$ and let $m$ be a function
of bounded variation. Denote $X^{\g}:=X^{\g;\0}$, i.e., $X^{\g}$ is conditioned
on $\{\int_0^T\g(t)\d X_t =\0\}$. Then
\begin{eqnarray*}
(X+m)^{\g;\y}_t &=& X^{\g}_t 
\, \,  + \left(m(t)-{\lla \1_t,\g\rra}^\top{\lla\g\rra}^{-1}
\int_0^T \g(u)\, \d m(u)\right) \\
& & 
+ \, \, {\lla \1_t,\g\rra}^\top{\lla \g\rra}^{-1}\y.
\end{eqnarray*}
\end{cor}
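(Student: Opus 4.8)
The plan is to obtain the identity from two applications of Theorem \ref{thm:ortho_decomp}, taking advantage of the observation recorded just before that theorem: the Gram matrix $\lla\g\rra$ and the vectors $\lla\1_t,\g\rra$ depend only on the covariance $R$, not on the mean. Since $X+m$ is a Gaussian process with mean $m$ whose covariance coincides with that of $X$ (because $m$ is deterministic), the matrices $\lla\g\rra$ and $\lla\1_t,\g\rra$ governing the two bridges are literally the same.

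First I would apply the representation (\ref{eq:ortho_bridge}) to the process $X+m$ with conditioning value $\y$, writing $(X+m)_t = X_t + m(t)$ and splitting the conditioning functional by linearity, $\int_0^T \g(u)\,\d(X+m)_u = \int_0^T \g(u)\,\d X_u + \int_0^T \g(u)\,\d m(u)$, to get
\begin{equation*}
(X+m)^{\g;\y}_t = X_t + m(t) - \lla\1_t,\g\rra^\top\lla\g\rra^{-1}\left(\int_0^T \g(u)\,\d X_u + \int_0^T \g(u)\,\d m(u) - \y\right).
\end{equation*}
Next I would apply the same theorem to the centered process $X$ (mean zero) with $\y=\0$, which reads
\begin{equation*}
X^{\g}_t = X_t - \lla\1_t,\g\rra^\top\lla\g\rra^{-1}\int_0^T \g(u)\,\d X_u,
\end{equation*}
and thus expresses $X_t$ as $X^{\g}_t$ plus the stochastic term $\lla\1_t,\g\rra^\top\lla\g\rra^{-1}\int_0^T \g(u)\,\d X_u$. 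Substituting this into the previous display makes the two copies of that stochastic term cancel; collecting the remaining deterministic pieces yields precisely the asserted formula.

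I expect no real obstacle, as the computation is entirely linear. The only points deserving a word of care are that the conditioning functional for $X+m$ splits additively into $\int_0^T \g(u)\,\d X_u + \int_0^T \g(u)\,\d m(u)$, the second being the same deterministic Riemann--Stieltjes integral that already appears in the mean formula of Theorem \ref{thm:ortho_decomp}, and that the covariance-dependent objects $\lla\g\rra$ and $\lla\1_t,\g\rra$ are genuinely identical for $X$ and $X+m$, which is exactly what lets the stochastic contributions cancel term by term.
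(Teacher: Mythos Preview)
Your argument is correct and is exactly the intended derivation: the paper states this as an immediate corollary of Theorem~\ref{thm:ortho_decomp} with no separate proof, and your two applications of (\ref{eq:ortho_bridge})---once to $X+m$ with target $\y$, once to the centered $X$ with target $\0$---followed by cancellation of the common stochastic term is precisely how one reads it off. The only substantive observation, which you make explicitly, is that $\lla\g\rra$ and $\lla\1_t,\g\rra$ depend only on the covariance and hence agree for $X$ and $X+m$.
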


\begin{rem}
Corollary \ref{cor:m-x} tells us how to construct, by adding a
deterministic drift, a general bridge from a bridge that is constructed 
from a centered process with conditioning $\y=\0$.
So, in what follows, we shall almost always assume that the process
$X$ is centered, i.e. $m(t)=0$, and all conditionings are to $\y=\0$.
\end{rem}

\begin{exa}\label{exa:Bb_sup} 
Let $X$ be a zero mean Gaussian process with covariance $R$.
Consider the conditioning on the final value and the average value:
\begin{eqnarray*}
X_T &=& 0, \\
\frac{1}{T}\int_0^T X_t \,\d t &=& 0.
\end{eqnarray*}  
This is a generalized Gaussian bridge.  Indeed, 
\begin{eqnarray*}
X_T &=& \int_0^T  1\,\d X_t 
=: \int_0^T g_1(t) \,\d X_t, \\
\frac{1}{T}\int_0^T X_t\,\d t &=&
\int_0^T \frac{T-t}{T}\,\d X_t 
=:
\int_0^T g_2(t) \,\d X_t. 
\end{eqnarray*}
Now,
\begin{eqnarray*}
\lla \1_t,g_1\rra &=& \E\left[ X_tX_T\right] 
= R(t,T), \\
\lla \1_t,g_2\rra &=& 
\E\left[X_t\, \frac{1}{T}\int_0^T X_s\,\d s\right] 
= \frac{1}{T}\int_0^T R(t,s)\, \d s, 
\end{eqnarray*}
\begin{eqnarray*}
\lla g_1,g_1 \rra &=& \E\left[X_TX_T\right] 
= R(T,T), \\
\lla g_1,g_2\rra &=& 
\E\left[X_T\, \frac{1}{T}\int_0^T X_s\, \d s\right] 
= \frac{1}{T}\int_0^T R(T,s)\, \d s, \\
\lla g_2,g_2\rra &=& 
\E\left[\frac{1}{T}\int_0^T X_s\,\d s\, 
\frac{1}{T}\int_0^T X_u\, \d u\right] 
=\frac{1}{T^2}\int_0^T\!\!\!\int_0^T R(s,u)\, \d u\d s, 
\end{eqnarray*}
$$
|\lla \g\rra| = \frac{1}{T^2} 
\int_0^T\int_0^T   R(T,T)R(s,u)-R(T,s)R(T,u) \,  \d u\,\d s
$$
and
$$ {\lla \g \rra}^{-1}=\frac{1}{|\lla \g\rra|}
\left[\begin{array}{rr}
\lla g_2,g_2\rra & -\lla g_1,g_2\rra \\
- \lla g_1,g_2\rra  & \lla g_1,g_1 \rra
\end{array}\right].
$$
Thus, by Theorem \ref{thm:ortho_decomp}, 
\begin{eqnarray*}
X_t^\g
&=& 
X_t-\frac{\lla \1_t,g_1\rra \lla g_2,g_2\rra - 
\lla \1_t,g_2\rra \lla g_1,g_2\rra}{|\lla \g\rra|}\int_0^T g_1(t) \,\d X_t \\
& & -\frac{\lla \1_t,g_2\rra \lla g_1,g_1\rra - 
\lla \1_t,g_1\rra \lla g_1,g_2\rra}{|\lla \g\rra|}\int_0^T g_2(t) \,\d X_t \\
&=& X_t - 
\frac{\int_0^T\int_0^T R(t,T)R(s,u)-R(t,s)R(T,s)\d s\, \d u }{\int_0^T  \int_0^T 
R(T,T)R(s,u)-R(T,s)R(T,u)  \, \d s \, \d u }X_T\\
& &- \frac{ T \int_0^T R(T,T)R(t,s)-R(t,T)R(T,s) \d s}{\int_0^T  \int_0^T 
R(T,T)R(s,u)-R(T,s)R(T,u)  \, \d s \, \d u }\int_0^T \frac{T-t}{T}\,\d X_t.
\end{eqnarray*}
\end{exa}

\begin{rem}
\begin{enumerate}
\item
Since Gaussian conditionings are projections in Hilbert space to a subspace, 
it is well-known that they can be done iteratively. Indeed, let 
$X^n:=X^{g_1,\ldots,g_n;y_1,\ldots,y_n}$ and let $X^0 := X$ be the original 
process.  Then the orthogonal generalized  bridge representation $X^N$ can be 
constructed from the rule
$$
X_t^n = X_t^{n-1} - 
\frac{{\lla \1_t,g_n\rra}_{n-1}}{{\lla g_n,g_n\rra}_{n-1}}
\left[\int_0^T g_n(u)\, \d X_u^{n-1}-y_n\right],
$$
where $\lla\cdot,\cdot\rra_{n-1}$ is the inner product in $\L_T(X^{n-1})$.
\item
If the conditioning variables $g_j$ are indicator functions $\1_{t_j}$ then the 
corresponding generalized bridge is a \emph{multibridge}. That is, it is pinned 
down to values $y_j$ at points $t_j$. For the multibridge 
$X^N=X^{\1_{t_1},\ldots,\1_{t_N};y_1,\ldots,y_N}$ 
the orthogonal bridge decomposition can be constructed from the iteration
\begin{eqnarray*}
X^0_t &=& X_t, \\
X^n_t &=& X^{n-1}_t -
\frac{R_{n-1}(t,t_n)}{R_{n-1}(t_n,t_n)}
\left[X^{n-1}_{t_n}-y_{n}\right],
\end{eqnarray*}
where
\begin{eqnarray*}
R_0(t,s) &=& R(t,s), \\
R_n(t,s) &=& R_{n-1}(t,s) - 
\frac{R_{n-1}(t,t_n)R_{n-1}(t_n,s)}{R_{n-1}(t_n,t_n)}.
\end{eqnarray*}
\end{enumerate}
\end{rem}

\section{Canonical Generalized Bridge Representation}
\label{sect:canonical}

The problem with the orthogonal bridge representation (\ref{eq:ortho_bridge})
of $X^{\g;\y}$ is that in order to construct it at any point $t\in[0,T)$
one needs the whole path of the underlying process $X$ up to time $T$.
In this section we construct a bridge representation that is canonical in the 
following sense:

\begin{dfn}\label{dfn:canonical}
The bridge $X^{\g;\y}$ is of \emph{canonical representation} if, for all 
$t \in [0,T)$, $X_t^{\g;\y} \in \L_t(X)$ and $X_t \in \L_t(X^{\g;\y})$.
\end{dfn}

\begin{exa}\label{exa:bb}
Consider the classical Brownian bridge. That is, condition the Brownian motion
$W$ with  $\g=g=1$. Now, the orthogonal representation is 
$$
W_t^1 = W_t - \frac{t}{T}W_T.
$$
This is not a canonical representation, since the future knowledge $W_T$ is 
needed to construct $W_t^1$ for any $t\in(0,T)$. A canonical representation for the 
Brownian bridge is, by calculating the $\ell^*_\g$ in 
Theorem \ref{thm:M-bridge-canonical} below, 
\begin{eqnarray*}
W_t^1 &=& W_t - \int_0^t\int_0^s \frac{1}{T-u} \, \d W_u \, \d s \\ 
&=& (T-t)\int_0^t \frac{1}{T-s}\, \d W_s.
\end{eqnarray*}
\end{exa}

\begin{rem}
Since the conditional laws of Gaussian processes are Gaussian and Gaussian 
spaces are linear, the assumptions $X_t^{\g;\y} \in \L_t(X)$ and 
$X_t \in \L_t(X^{\g;\y})$ of Definition \ref{dfn:canonical} are the same as 
assuming that $X^{\g;\y}_t$ is $\F_t^{X}$-measurable and $X_t$ is 
$\F^{X^{\g;\y}}_t$-measurable (and, consequently, 
$\F_t^{X}=\F_t^{X^{\g;\y}}$).  This fact is very special to Gaussian
processes.  Indeed, in general conditioned processes such as generalized
bridges are not linear transformations of the underlying process.
\end{rem}

We shall require that the restricted measures 
$\P^{\g,\y}_t:= \P^{\g;\y}|\F_t$ and $\P_t:=\P|\F_t$ are equivalent for all
$t<T$ (they are obviously singular for $t=T$). To this end we assume that
the matrix
\begin{eqnarray}\label{eq:g}
{\lla\g\rra}_{ij}(t) &:=&
\E\left[\Big(G_T^i(X)-G_t^i(X)\Big)\Big(G_T^j(X)-G_t^j(X)\Big)\right] \\
\nonumber
&=& 
\E\left[\int_t^Tg_i(s)\, \d X_s \int_t ^T g_j(s)\, \d X_s\right]
\end{eqnarray}
is invertible for all $t<T$.

\begin{rem}
On notation: in the previous section we considered the matrix $\lla\g\rra$,
but from now on we consider the function $\lla\g\rra(\cdot)$.  Their
connection is of course $\lla \g \rra = \lla\g\rra(0)$. We hope that
this overloading of notation does not cause confusion to the reader. 
\end{rem}

\subsection*{Gaussian Martingales}
We first construct the canonical representation when the underlying process 
is a continuous Gaussian martingale $M$ with strictly increasing bracket 
$\la M\ra$ and $M_0=0$.  Note that the bracket is strictly increasing if and 
only if the covariance $R$ is positive definite. Indeed, for Gaussian 
martingales we have $R(t,s) = \Var(M_{t\wedge s})=\la M\ra_{t\wedge s}$.   

Define a Volterra kernel
\begin{equation}\label{eq:lg} 
\ell_\g(t,s) := -\g^\top(t)\,{\lla \g \rra}^{-1}(t)\,\g(s).
\end{equation}
Note that the kernel $\ell_\g$ depends on the process $M$ through its 
covariance $\lla \cdot,\cdot \rra$, and in the Gaussian martingale case
we have
$$
{\lla \g \rra}_{ij}(t) =
\int_t^T g_i(s)g_j(s) \, \d\la M \ra_s.
$$

The following Lemma \ref{lmm:canon_radon_nikodym} is the key observation in 
finding the canonical generalized bridge representation.  Actually, it is a 
multivariate version of Proposition 6 of \cite{GasbarraSottinenValkeila}.

\begin{lmm}\label{lmm:canon_radon_nikodym}
Let $\ell_\g$ be given by (\ref{eq:lg}) and let $M$ be a continuous Gaussian
martingale with strictly increasing bracket $\la M\ra$ and $M_0=0$. Then
the Radon-Nikodym derivative $\d\P^\g_t/\d\P_t$ can be expressed in the 
form
\begin{equation*}
\frac{\d\P^\g_t}{\d\P_t} = 
\exp\left\{\int_0^t \int_0^s \ell_\g(s,u)\, \d M_u \d M_s\
-\frac12\int_0^t \left( \int_0^s \ell_\g(s,u)\, \d M_u\right)^2 
\d\la M\ra_s\right\}
\end{equation*}
for all $t \in [0,T)$.
\end{lmm}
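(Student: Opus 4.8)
The plan is to compute the Radon--Nikodym derivative in closed form directly from Gaussian conditioning, and then to recognize the resulting expression as the Dol\'eans--Dade exponential on the right-hand side. Write $\G_t := \int_0^t \g(s)\,\d M_s$, so that $\G_T$ is the $N$-dimensional Gaussian vector on which we condition. Since $M$ is a martingale, the increment $\G_T-\G_t$ is independent of $\F_t$ and centered Gaussian with covariance matrix $\lla\g\rra(t)$; hence the conditional law of $\G_T$ given $\F_t$ is $N(\G_t,\lla\g\rra(t))$, while the unconditional law of $\G_T$ is $N(\0,\lla\g\rra(0))$. A disintegration (Bayes) argument shows that, against $\F_t$-measurable test variables, the density of $\P^\g_t$ with respect to $\P_t$ is the ratio of the conditional density of $\G_T$ at $\0$ given $\F_t$ to the unconditional density of $\G_T$ at $\0$. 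This yields the explicit formula
\begin{equation*}
Z_t := \frac{\d\P^\g_t}{\d\P_t} = \sqrt{\frac{\det\lla\g\rra(0)}{\det\lla\g\rra(t)}}\,\exp\left\{-\tfrac12\,\G_t^\top\lla\g\rra^{-1}(t)\,\G_t\right\},
\end{equation*}
valid for $t<T$ since $\lla\g\rra(t)$ is invertible there, with $Z_0=1$.

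It then remains to identify $Z_t$ with the claimed stochastic exponential. First I would observe that the inner integral in the statement equals $\int_0^t \ell_\g(t,u)\,\d M_u = -\g^\top(t)\lla\g\rra^{-1}(t)\G_t =: \alpha_t$, because the deterministic factor $\g^\top(t)\lla\g\rra^{-1}(t)$ pulls out of the Wiener integral in $u$. Thus the right-hand side of the lemma is precisely $\exp\{\int_0^t\alpha_s\,\d M_s-\tfrac12\int_0^t\alpha_s^2\,\d\la M\ra_s\}$, the Dol\'eans--Dade exponential $\mathcal{E}(\int_0^\cdot\alpha_s\,\d M_s)_t$, i.e. the unique solution of the linear equation $\d Y_t=\alpha_t Y_t\,\d M_t$, $Y_0=1$. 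Hence it suffices to show that $Z_t$ solves this same SDE.

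To that end I would apply It\^o's formula to $Z_t=f(t,\G_t)$ with $f(t,\x)=\sqrt{\det\lla\g\rra(0)/\det\lla\g\rra(t)}\,\exp\{-\tfrac12\,\x^\top\lla\g\rra^{-1}(t)\,\x\}$, using $\d\G_t=\g(t)\,\d M_t$ and $\d\la \G^i,\G^j\ra_t=g_i(t)g_j(t)\,\d\la M\ra_t$. The time-dependence enters only through $\lla\g\rra(t)$; from $\d\lla\g\rra(t)=-\g(t)\g^\top(t)\,\d\la M\ra_t$ one gets, by Jacobi's formula, $\d\log\det\lla\g\rra(t)=-\g^\top(t)\lla\g\rra^{-1}(t)\g(t)\,\d\la M\ra_t$, and by $\d(\lla\g\rra^{-1})=-\lla\g\rra^{-1}(\d\lla\g\rra)\lla\g\rra^{-1}$ that $\d\lla\g\rra^{-1}(t)=\lla\g\rra^{-1}(t)\g(t)\g^\top(t)\lla\g\rra^{-1}(t)\,\d\la M\ra_t$. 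Collecting terms, the first-order stochastic term contributes exactly $\alpha_t Z_t\,\d M_t$ (using $\g^\top\lla\g\rra^{-1}\G_t=-\alpha_t$), while the explicit time-drift and the second-order It\^o term turn out to be $\tfrac12 Z_t[\g^\top\lla\g\rra^{-1}\g-\alpha_t^2]\,\d\la M\ra_t$ and $\tfrac12 Z_t[\alpha_t^2-\g^\top\lla\g\rra^{-1}\g]\,\d\la M\ra_t$ respectively; these cancel. Therefore $\d Z_t=\alpha_t Z_t\,\d M_t$ with $Z_0=1$, so $Z_t=\mathcal{E}(\int_0^\cdot\alpha_s\,\d M_s)_t$, which is the asserted formula.

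The bookkeeping of the three It\^o contributions is routine; the steps I expect to require the most care are twofold. First, the rigorous justification of the closed form for $Z_t$: one must establish the disintegration identity $\d\P^\g_t/\d\P_t=p_{\G_T\mid\F_t}(\0)/p_{\G_T}(\0)$ cleanly, leaning on the independence of martingale increments and on the invertibility of $\lla\g\rra(t)$ for $t<T$. Second, verifying the exact cancellation of the two $\d\la M\ra_t$ drift terms, which hinges on writing the quadratic form $\x^\top\lla\g\rra^{-1}\g\g^\top\lla\g\rra^{-1}\x$ as $(\g^\top\lla\g\rra^{-1}\x)^2=\alpha_t^2$ and matching it against the trace term $\g^\top\lla\g\rra^{-1}\g$ produced by the $\log\det$ derivative; getting the signs and the factors of $\tfrac12$ right is where errors would most easily creep in.
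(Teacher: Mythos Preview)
Your proposal is correct and follows essentially the same route as the paper: first obtain the closed form $Z_t=\sqrt{|\lla\g\rra|(0)/|\lla\g\rra|(t)}\exp\{-\tfrac12\G_t^\top\lla\g\rra^{-1}(t)\G_t\}$ via Bayes/disintegration and the martingale independence of increments, then apply It\^o's formula to identify it with the stated exponential. The only cosmetic difference is that the paper applies It\^o directly to (the log of) the density and reads off the three contributions, whereas you recast the target as the Dol\'eans--Dade exponential $\mathcal{E}(\int_0^\cdot\alpha_s\,\d M_s)$ and verify $\d Z_t=\alpha_t Z_t\,\d M_t$; the underlying computations (Jacobi's formula for $\d\log\det\lla\g\rra(t)$ and the derivative of $\lla\g\rra^{-1}(t)$) are identical.
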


\begin{proof}
Let 
$$
p(\y;\mmu,\SSigma) := \frac{1}{(2\pi)^{N/2}|\SSigma|^{1/2}}
\exp\left\{-\frac12(\y-\mmu)^\top\SSigma^{-1}(\y-\mmu)\right\}
$$
be the Gaussian density on $\mathbb{R}^N$ and let
$$
\alpha_t^\g(\d\y) := \P\left[\G_T(M) \in \d\y \ \Big| \ \F_t^M\right]
$$
be the conditional law of the conditioning functionals
$\G_T(M) = \int_0^T \g(s)\, \d M_s$ given the information $\F_t^M$.
 
First, by the Bayes' formula, we have
$$
\frac{\d\P^\g_t}{\d\P_t} = \frac{\d\alpha_t^\g}{\d\alpha_0^\g}(\0).
$$

Second, by the martingale property, we have
$$
\frac{\d\alpha_t^\g}{\d\alpha_0^\g}(\0)
= \frac{p\Big(\0;\G_t(M),\lla \g \rra(t)\Big)}{
p\Big(\0;\G_0(M),\lla \g \rra(0)\Big)},
$$
where we have denoted $\G_t(M) = \int_0^t \g(s)\,\d M_s$.

Third, denote
$$
\frac{p\Big(\0;\G_t(M),\lla \g \rra(t)\Big)}{
p\Big(\0;\G_0(M),\lla \g \rra(0)\Big)}
=: \left( \frac{|\lla \g \rra|(0)}{|\lla \g \rra|(t)}\right)^\frac12
\exp\left\{F(t,\G_t(M))-F(0,\G_0(M)\right\},
$$
with
$$
F(t,\G_t(M))=-\frac12\left(\int_0^t \g(s)\, \d M_s\right)^\top\lla \g \rra^{-1}(0)
\left(\int_0^t \g(s) \, \d M_s \right).
$$
Then, straightforward differentiation yields
\begin{eqnarray*}
\int_0^t\frac{\p F}{\p s}(s,\G_s(M)) \, \d s
&=& -\frac12\int_0^t\left( \int_0^s \ell_\g(s,u)\, \d M_u\right)^2 \, 
\d\la M\ra_s,\\
\int_0^t \frac{\p F}{\p x}(s,\G_s(M)) \, \d M_s
&=& \int_0^t  \int_0^s \ell_\g(s,u) \, \d M_u\, \d M_s,\\
-\frac12 \int_0^t \frac{\p^2 F}{\p x^2}(s,\G_s(M)) \, \d\la M\ra_s
 &=& \log\left(\frac{|\lla \g \rra|(t)}{|\lla\g \rra|(0)}\right)^\frac12
\end{eqnarray*}
and the form of the Radon--Nikodym derivative follows 
by applying the It\^o formula. 
\end{proof}
    
\begin{cor}\label{cor:Mg}  
The canonical bridge representation $M^\g$ satisfies the stochastic 
differential equation
\begin{equation}\label{eq:can brg 1}
\d M_t= \d M^\g_t-\int_0^t \ell_\g(t,s)\, \d M^\g_s\, \d \la M\ra_t,
\end{equation}
where $\ell_\g$ is given by (\ref{eq:lg}). Moreover  $\la M\ra = \la M^\g\ra$.

\end{cor}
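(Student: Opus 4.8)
The plan is to read the claimed stochastic differential equation off Girsanov's theorem, applied to the Radon--Nikodym derivative of Lemma \ref{lmm:canon_radon_nikodym}. First I would set $\Psi_s := \int_0^s \ell_\g(s,u)\, \d M_u$, which is $\F^M_s$-measurable, so that $L_t := \int_0^t \Psi_s\, \d M_s$ is a well-defined continuous $\P$-local martingale and the two double integrals in the exponent of Lemma \ref{lmm:canon_radon_nikodym} become $\int_0^t \Psi_s\, \d M_s$ and $\int_0^t \Psi_s^2\, \d\la M\ra_s$. Applying the It\^o formula to that exponential then shows that $Z_t := \d\P^\g_t/\d\P_t$ solves $\d Z_t = Z_t\,\Psi_t\, \d M_t$, i.e. $Z=\mathcal{E}(L)$ is the stochastic exponential of $L$. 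Since $Z_t$ is, by construction, a genuine $(\P,\F)$-martingale density on $[0,T)$, no separate Novikov-type integrability check is required.

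Next I would invoke Girsanov's theorem. Because $\P^\g_t\sim\P_t$ on $\F_t$ for every $t<T$ with density $\mathcal{E}(L)_t$, the process
$$
B_t := M_t - \la M,L\ra_t = M_t - \int_0^t \Psi_s\, \d\la M\ra_s
$$
is a continuous $\P^\g$-local martingale, and invariance of the quadratic variation under an equivalent change of measure gives $\la B\ra=\la M\ra$. Thus, under $\P^\g$, the coordinate process decomposes as
$$
\d M_t = \d B_t + \left(\int_0^t \ell_\g(t,u)\, \d M_u\right)\d\la M\ra_t ,
$$
a continuous $\P^\g$-martingale $B$ with deterministic bracket $\la M\ra$ plus an information drift.

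It remains to transport this decomposition from $(\Omega,\P^\g)$, where $M$ carries the bridge law, to $(\Omega,\P)$, where the representation $M^\g$ must carry that same law. The key point is that $B$ is, under $\P^\g$, a continuous Gaussian martingale with exactly the same deterministic bracket $\la M\ra$ that $M$ has under $\P$, so the $\P^\g$-law of $B$ equals the $\P$-law of $M$; and the displayed linear Volterra equation exhibits the bridge $M$ as a fixed measurable functional $\Phi$ of its own martingale part $B$. Defining $M^\g:=\Phi(M)$ on $(\Omega,\P)$ — that is, letting the original martingale $M$ play the role of the innovation $B$ — produces a process whose $\P$-law is the bridge law, and which by construction satisfies
$$
M^\g_t = M_t + \int_0^t\left(\int_0^s \ell_\g(s,u)\, \d M^\g_u\right)\d\la M\ra_s ,
$$
which is precisely (\ref{eq:can brg 1}) after rearrangement. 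The bracket identity is then immediate, since $M^\g$ and $M$ differ only by the finite-variation drift $\int_0^\cdot\big(\int_0^s \ell_\g(s,u)\, \d M^\g_u\big)\,\d\la M\ra_s$, so their continuous-martingale parts coincide and $\la M^\g\ra=\la M\ra$.

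I expect the genuine obstacle to be this transport step: one must argue cleanly that the Girsanov drift, which is naturally written in terms of $M$ under $\P^\g$, turns into a drift in terms of $M^\g$ under $\P$. This hinges on identifying the bridge as a measurable functional of its innovation martingale with a well-posed (linear Volterra) solution map $\Phi$, equivalently on the canonical requirements $M^\g_t\in\L_t(M)$ and $M_t\in\L_t(M^\g)$ of Definition \ref{dfn:canonical}. Everything else reduces to a routine application of Girsanov's theorem together with the invariance of brackets under an equivalent change of measure.
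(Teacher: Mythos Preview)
Your approach is essentially the same as the paper's: the paper's proof is literally the single sentence ``The claim follows by using the Girsanov's theorem,'' applied to the density of Lemma~\ref{lmm:canon_radon_nikodym}. You have spelled out in detail exactly what that entails --- recognizing the density as the stochastic exponential $\mathcal{E}(L)$ with $L_t=\int_0^t\Psi_s\,\d M_s$, reading off the $\P^\g$-semimartingale decomposition of the coordinate process, and then transferring it back to $(\Omega,\P)$ --- and your signs and bracket identity are correct. The transport step you flag as the one genuine subtlety is indeed swept under the rug by the paper's one-line proof; your treatment of it (identifying $B$ under $\P^\g$ with $M$ under $\P$ via equality of brackets, and pushing the bridge through the Volterra solution map) is the right way to make it honest.
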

\begin{proof}
The claim follows by using the Girsanov's theorem. 
\end{proof}

\begin{rem}
\begin{enumerate}
\item
Note that for all $\epsilon>0$,
$$
\int_0^{T-\epsilon} \int_0^t \ell_\g(t,s)^2 \, \d\la M\ra_s \, \d\la M\ra_t < \infty. 
$$
In view of (\ref{eq:can brg 1}) this means that the processes $M$ and $M^\g$
are equivalent in law on $[0,T-\epsilon]$ for all $\epsilon>0$.  Indeed, equation (\ref{eq:can brg 1})
can be viewed as the \emph{Hitsuda representation} between two equivalent 
Gaussian processes, cf. Hida and Hitsuda \cite{HidaHitsuda}. Also note that
$$
\int_0^{T} \int_0^t \ell_\g(t,s)^2 \, \d\la M\ra_s \, \d\la M\ra_t = \infty
$$
meaning that the measures $\P$ and $\P^\g$ are singular on $[0,T]$.
\item
In the case of the Brownian bridge, cf. Example \ref{exa:bb}, the item (i) above
can be clearly seen. Indeed, 
$$
\ell_\g(t,s) = \frac{1}{T-t}
$$
and $\d\la W\ra_s = \d s$. 
\item
In the case of $\y\ne\0$, the formula (\ref{eq:can brg 1}) takes the form
\begin{equation}\label{eq:can brg 2}
\d M_t= \d M^{\g;\y}_t+
\left(\g^\top(t){\lla \g \rra}^{-1}(t)\y-
\int_0^t \ell_\g(t,s)\, \d M^{\g;\y}_s\right)\, \d \la M\ra_t.
\end{equation}
\end{enumerate}
\end{rem}

Next we solve the stochastic differential equation (\ref{eq:can brg 1}) of 
Corollary \ref{cor:Mg}. In general, solving a Volterra--Stieltjes equation like 
(\ref{eq:can brg 1}) in a closed form is difficult.  Of course, the general 
theory of Volterra equations suggests that the solution will be of the form 
(\ref{brg mg}) of Theorem \ref{thm:M-bridge-canonical} below, where $\ell_\g^*$
is the resolvent kernel of $\ell_\g$ determined by the resolvent equation
(\ref{eq:resolvent}) given below.  Also, the general theory suggests that the 
resolvent kernel can be calculated implicitly by using the Neumann series. In 
our case the kernel $\ell_\g$ is a  quadratic form that factorizes in its 
argument. This allows us to calculate the resolvent $\ell_\g^*$ explicitly as 
(\ref{eq:lg-resolvent}) below. 
(We would like to point out that a similar SDE was treated in \cite{AliliWu}
and \cite{HibinoHitsudaMuraoka}.)

\begin{thm}\label{thm:M-bridge-canonical}
Let $s\le t \in[0,T]$. Define the Volterra kernel
\begin{eqnarray}\label{eq:lg-resolvent}
\ell_\g^*(t,s) &:=&
-\ell_\g(t,s)\frac{|\lla \g\rra|(t)}{|\lla \g\rra|(s)} \\ \nonumber
&=&
|{\lla \g \rra}|(t)\g^\top(t){\lla \g \rra}^{-1}(t) \,\,
\frac{\g(s)}{|{\lla \g \rra}|(s)}.
\end{eqnarray}
Then the bridge $M^\g$ has the canonical representation
\begin{equation}\label{brg mg}
\d M^\g_t=\d M_t-\int_0^t \ell_\g^*(t,s) \, \d M_s\, \d \la M\ra_t,
\end{equation}
i.e., (\ref{brg mg}) is the solution to (\ref{eq:can brg 1}).
\end{thm}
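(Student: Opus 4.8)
The intended route, signalled by the reference to the resolvent equation, is to treat (\ref{brg mg}) as an ansatz and to check that it inverts (\ref{eq:can brg 1}). First I would substitute the candidate differential $\d M^\g_t=\d M_t-\big(\int_0^t\ell_\g^*(t,s)\,\d M_s\big)\,\d\la M\ra_t$ into the right-hand side of (\ref{eq:can brg 1}). The two $\d M_t$ terms cancel, and since $\la M\ra$ is strictly increasing, matching the $\d\la M\ra_t$ coefficients reduces the claim to the process identity $\int_0^t\ell_\g^*(t,s)\,\d M_s=-\int_0^t\ell_\g(t,s)\,\d M^\g_s$. Substituting $\d M^\g$ once more, interchanging the order of integration in the resulting iterated integral by a stochastic Fubini argument, and using that the abstract Wiener integral $f\mapsto\int_0^t f\,\d M$ is an isometry (so two such integrals agree only if their integrands agree $\d\la M\ra$-a.e.), the whole theorem collapses to the deterministic resolvent identity
\[
\ell_\g^*(t,s)+\ell_\g(t,s)=\int_s^t\ell_\g(t,u)\,\ell_\g^*(u,s)\,\d\la M\ra_u,\qquad s\le t.
\]
Thus the analytic content is entirely the verification of this identity for the explicit kernels (\ref{eq:lg}) and (\ref{eq:lg-resolvent}).

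Second, to verify the identity I would exploit that $\ell_\g$ is a \emph{degenerate} (finite-rank) kernel, $\ell_\g(t,s)=-\g^\top(t)\,{\lla\g\rra}^{-1}(t)\,\g(s)$. The single computational fact behind everything is obtained by differentiating the definition (\ref{eq:g}) of ${\lla\g\rra}(t)=\int_t^T\g(s)\g^\top(s)\,\d\la M\ra_s$, namely
\[
\d\,{\lla\g\rra}(t)=-\g(t)\g^\top(t)\,\d\la M\ra_t,
\qquad\text{so}\qquad
\int_s^t\g(u)\g^\top(u)\,\d\la M\ra_u={\lla\g\rra}(s)-{\lla\g\rra}(t).
\]
Because of the factorisation, the $u$-independent factors $\g^\top(t){\lla\g\rra}^{-1}(t)$ and $\g(s)$ pull outside the convolution, leaving an $N\times N$ matrix integral of the form $\int_s^t(\cdots)\g(u)\g^\top(u)(\cdots)\,\d\la M\ra_u$ that telescopes through the displayed endpoint identity; the scalar normalising factors are controlled by Jacobi's determinant derivative $\d\,|{\lla\g\rra}|(t)=-\,|{\lla\g\rra}|(t)\,\g^\top(t){\lla\g\rra}^{-1}(t)\g(t)\,\d\la M\ra_t$, which is exactly the computation already performed in Lemma \ref{lmm:canon_radon_nikodym}.

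A cleaner way to \emph{discover} rather than guess the resolvent, which I would use as a cross-check, is to solve the resolvent identity using the separable structure directly. Writing $\ell_\g^*(t,s)=-\g^\top(t){\lla\g\rra}^{-1}(t)\,c(t,s)$ and feeding this back in shows that, for each fixed $s$, the $\mathbb{R}^N$-valued function $c(\cdot,s)$ solves the linear matrix Stieltjes equation $\d_t c(t,s)=\big(\d\,{\lla\g\rra}(t)\big){\lla\g\rra}^{-1}(t)\,c(t,s)$ with initial value $c(s,s)=-\g(s)$. Its fundamental solution is simply ${\lla\g\rra}(t){\lla\g\rra}^{-1}(s)$, since $\d_t\big[{\lla\g\rra}(t){\lla\g\rra}^{-1}(s)\big]=\big(\d\,{\lla\g\rra}(t)\big){\lla\g\rra}^{-1}(s)$; this pins down $c(\cdot,s)$, and hence the resolvent kernel $\ell_\g^*$, in closed form.

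The main obstacle I anticipate is not conceptual but the matrix Stieltjes calculus: $\g$ itself is only an $L^2(\d\la M\ra)$ integrand and need not be a genuine function, whereas ${\lla\g\rra}(t)$, ${\lla\g\rra}^{-1}(t)$ and $|{\lla\g\rra}|(t)$ are absolutely continuous in $t$ with respect to $\la M\ra$, so all differentiation must act on these smooth matrix-valued objects and never on $\g$ directly. One must also justify the stochastic Fubini interchange and the passage from equality of two Wiener integrals to equality of their integrands, and keep track of the blow-up of the kernels as $t\uparrow T$ by working on $[0,T-\eps]$ and invoking $\int_0^{T-\eps}\int_0^t\ell_\g(t,s)^2\,\d\la M\ra_s\,\d\la M\ra_t<\infty$, exactly as in the equivalence discussion following Corollary \ref{cor:Mg}.
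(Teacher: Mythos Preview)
Your proposal is correct and follows essentially the same route as the paper: reduce the claim to the deterministic resolvent identity by substituting (\ref{brg mg}) into (\ref{eq:can brg 1}) and applying stochastic Fubini, then verify the resolvent identity by direct computation exploiting the rank-$N$ factorisation of $\ell_\g$ together with the endpoint identity coming from $\d\lla\g\rra(t)=-\g(t)\g^\top(t)\,\d\la M\ra_t$. Your ODE-based cross-check for \emph{discovering} the resolvent via the fundamental solution $\lla\g\rra(t)\lla\g\rra^{-1}(s)$ is a pleasant addition not present in the paper, which simply posits the kernel and verifies.
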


\begin{proof}
Equation (\ref{brg mg}) is the solution to (\ref{eq:can brg 1})
if the kernel $\ell_\g^*$ satisfies the 
\emph{resolvent equation} 
\begin{equation}\label{eq:resolvent}
\ell_\g(t,s)+\ell_\g^*(t,s) =
\int_s^t \ell_\g(t,u)\ell_\g^*(u,s) \, \d \la M\ra_u.
\end{equation}
This well-known if $\d\la M\ra_u=\d u$, cf. e.g. Riesz and Sz.-Nagy
\cite{RieszSzNagy}. In the $\d\la M\ra$ case the resolvent equation 
can be derived as in the classical $\d u$ case. We show the derivation here,
for the convenience of the reader:

Suppose (\ref{brg mg}) is the solution to (\ref{eq:can brg 1}).
This means that 
\begin{eqnarray*}
\d M_t &=& \left(\d M_t - 
\int_0^t \ell_\g^*(t,s)\,\d M_s\,\d\la M\ra_t\right) \\
& &- \int_0^t \ell_\g(t,s)
\left(\d M_s - 
\int_0^s \ell_\g^*(s,u)\,\d M_u\,\d\la M\ra_s\right)\d \la M\ra_t,
\end{eqnarray*}
or, in the integral form, by using the Fubini's theorem,
\begin{eqnarray*}
M_t &=& M_t - 
\int_0^t\!\int_s^t \ell_\g^*(u,s)\,\, \d\la M \ra_u \d M_s  \\
& &
-\int_0^t\!\int_s^t\ell_\g(u,s)\,\,\d\la M\ra_u\d M_s \\
& & + \int_0^t\!\int_s^t\,\int_u^s \ell_\g(s,v)\ell_\g^*(v,u)\d\la M\ra_v 
\,\,\d\la M\ra_u\d M_s.
\end{eqnarray*}
The resolvent criterion (\ref{eq:resolvent}) follows by identifying the 
integrands in the $\d\la M\ra_u\d M_s$-integrals above.

Finally, let us check that the resolvent equation (\ref{eq:resolvent}) is 
satisfied with $\ell_\g$ and $\ell_\g^*$ defined by (\ref{eq:lg}) and 
(\ref{eq:lg-resolvent}), respectively: 
\begin{eqnarray*}
\lefteqn{\int_s^t \ell_\g(t,u)\ell_\g^*(u,s)\,\d\la M \ra_u} \\
&=& -\int_s^t \g^\top(t) {\lla \g\rra}^{-1}(t)\g(u) \,\,
|\lla \g \rra|(u)\g^\top(u) {\lla \g\rra}^{-1}(u)\frac{\g(s)}{
|\lla \g \rra|(s)}\,\d\la M \ra_u \\
&=&
-\g^\top(t) {\lla\g\rra}^{-1}(t) \frac{\g(s)}{|\lla\g \rra|(s)}
\int_s^t \g(u)
|\lla \g \rra|(u)\g^\top(u) {\lla \g\rra}^{-1}(u)
\,\d\la M \ra_u \\
&=&
\g^\top(t) {\lla\g\rra}^{-1}(t) \frac{\g(s)}{|\lla \g \rra|(s)}
\int_s^t {\lla \g \rra}^{-1}(u)|\lla \g \rra|(u)\d\lla \g \rra(u) \\
&=&
\g^\top(t) {\lla\g\rra}^{-1}(t) \frac{\g(s)}{|\lla \g \rra|(s)}
\Big(|\lla \g \rra|(t)-|\lla \g \rra|(s)\Big) \\
&=& 
\g^\top(t) {\lla \g \rra}^{-1}(t) \g(s)
\frac{|\lla \g \rra|(t)}{|\lla \g \rra|(s)}
-\g^\top(t) {\lla \g \rra}^{-1}(t) \g(s) \\
&=&
\ell_\g^*(t,s)+\ell_\g(t,s),
\end{eqnarray*}
since
$$
\d{\lla \g \rra}(t) = -\g^\top(t)\g(t)\d\la M \ra_t. 
$$
So, the resolvent equation (\ref{eq:resolvent}) holds.
\end{proof}

\subsection*{Gaussian Prediction-Invertible Processes} 

To construct a canonical representation for bridges of Gaussian 
non-semimartingales is problematic, since we cannot apply stochastic calculus to
non-semimartingales. In order to invoke the stochastic calculus we need to 
associate the Gaussian non-semimartingale with some martingale.  A natural 
martingale associated with a stochastic process is its prediction martingale:  

For a (Gaussian) process $X$ its \emph{prediction martingale} is
the process $\hat X$ defined as
$$
\hat X_t = \E\left[X_T\big|\F_t^X\right].
$$ 
Since for Gaussian processes $\hat X_t \in \L_t(X)$, we may write, at
least informally, that
$$
\hat X_t = \int_0^t p(t,s)\, \d X_s,
$$
where the abstract kernel $p$ depends also on $T$
(since $\hat X$ depends on $T$). In Definition \ref{dfn:pred-inv}
below we assume that the kernel $p$ exists as a real, and not only formal, 
function.  We also assume that the kernel $p$ is invertible.

\begin{dfn}\label{dfn:pred-inv}
A Gaussian process $X$ is \emph{prediction-invertible} if there exists a kernel 
$p$ such that its prediction martingale $\hat X$ is continuous, can be 
represented as
$$
\hat X_t = \int_0^t p(t,s)\, \d X_s, 
$$
and there exists an inverse kernel $p^{-1}$ such that, for all
$t\in[0,T]$, $p^{-1}(t,\cdot)\in L^2([0,T],\d\langle\hat X\rangle)$ and 
$X$ can be recovered from $\hat X$ by
$$
X_t = \int_0^t p^{-1}(t,s)\, \d \hat X_s.
$$ 
\end{dfn}

\begin{rem}
In general it seems to be a difficult problem to determine whether a Gaussian
process is prediction-invertible or not.  In the discrete time non-degenerate
case all Gaussian processes are prediction-invertible. In continuous time the 
situation is more difficult, as Example \ref{exa:non-pred-inv} below 
illustrates. Nevertheless, we can immediately see that if the centered Gaussian 
process $X$ with covariance $R$ is prediction-invertible, then the covariance 
must satisfy the relation
$$
R(t,s) = \int_0^{t\wedge s} p^{-1}(t,u)\, p^{-1}(s,u)\, \d\la \hat X\ra_u,
$$
where the bracket $\la \hat X \ra$ can be calculated as the variance of the
conditional expectation:
$$
\la \hat X \ra_u = \Var\left(\E\left[ X_T|\F_u\right]\right).
$$
However, this criterion does not seem to be very helpful in practice.
\end{rem}

\begin{exa}\label{exa:non-pred-inv}
Consider the Gaussian slope $X_t = t\xi$, $t\in[0,T]$, where $\xi$ is a 
standard normal random variable.  Now, if we consider the ``raw filtration''
$\mathcal{G}_t^X = \sigma(X_s; s\le t)$, then $X$ is not prediction 
invertible.  Indeed, then $\hat X_0 = 0$ but $\hat X_t=X_T$, if $t\in(0,T]$.  
So, $\hat X$ is not continuous. On the other hand, the augmented filtration is 
simply $\F_t^X=\sigma(\xi)$ for all $t\in[0,T]$. So, $\hat X = X_T$.  Note, 
however, that in both cases the slope $X$ can be recovered from the prediction 
martingale: $X_t = \frac{t}{T} \hat X_t$.
\end{exa}

In order to represent abstract Wiener integrals of $X$ in terms of
Wiener--It\^o integrals of $\hat X$ we need to extend the kernels $p$ and
$p^{-1}$ to linear operators:

\begin{dfn} \label{def:p operator}
Let $X$ be prediction-invertible. Define operators $\PP$ and $\PP^{-1}$
by extending linearly the relations
\begin{eqnarray*}
\PP[\1_t] &=& p(t,\cdot), \\
\PP^{-1}[\1_t] &=& p^{-1}(t,\cdot).
\end{eqnarray*}
\end{dfn}

Now the following lemma is obvious.

\begin{lmm}\label{lmm:P-isometry}
Let $f$ be such a function that 
$\PP^{-1}[f] \in L^2([0,T],\d\la\hat X\ra)$ and let
$\hat g\in L^2([0,T],\d\la \hat X\ra)$. Then
\begin{eqnarray}\label{eq:Pminus}
\int_0^T f(t)\, \d X_t &=& \int_0^T \PP^{-1}[f](t)\, \d\hat X_t, \\
\label{eq:P}
\int_0^T \hat g(t)\, \d\hat X_t &=& \int_0^T \PP[\hat g](t)\, \d X_t.
\end{eqnarray}
\end{lmm}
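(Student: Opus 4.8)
The plan is to verify both identities first on the indicator generators $\1_t$, where they reduce directly to the defining relations of prediction-invertibility, and then to propagate them to arbitrary integrands by linearity together with an isometric continuity argument. The key structural observation is that all the maps involved are isometries onto one and the same Gaussian space.

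First I would record that the two stochastic integrals appearing in the statement are isometries onto a common space. By Definition \ref{dfn:AWI} the abstract Wiener integral $f\mapsto\int_0^T f\,\d X$ is an isometry from $\LLambda_T(X)$ onto $\L_T(X)$, while the Wiener--It\^o integral $\hat g\mapsto\int_0^T \hat g\,\d\hat X$ is an isometry from $L^2([0,T],\d\la\hat X\ra)$ onto $\L_T(\hat X)$. Prediction-invertibility forces $\L_T(X)=\L_T(\hat X)$: the inclusion $\hat X_t\in\L_t(X)$ is immediate from $\hat X_t=\E[X_T\mid\F_t^X]$, while the recovery formula $X_t=\int_0^t p^{-1}(t,s)\,\d\hat X_s$ of Definition \ref{dfn:pred-inv} gives the reverse inclusion $X_t\in\L_t(\hat X)$.

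Next I would check (\ref{eq:Pminus}) and (\ref{eq:P}) on the generators. Taking $f=\1_t$, the left side of (\ref{eq:Pminus}) is $\int_0^T\1_t\,\d X=X_t$ by (\ref{eq:AWI-iso}); the right side is $\int_0^T\PP^{-1}[\1_t]\,\d\hat X=\int_0^t p^{-1}(t,s)\,\d\hat X_s$ by Definition \ref{def:p operator}, and this equals $X_t$ exactly by the recovery formula of Definition \ref{dfn:pred-inv}. Symmetrically, for $\hat g=\1_t$ the left side of (\ref{eq:P}) is $\hat X_t$, while the right side is $\int_0^t p(t,s)\,\d X_s=\hat X_t$ by the prediction-martingale representation. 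Both identities thus hold on indicators, and since $\int_0^T\cdot\,\d X$, $\int_0^T\cdot\,\d\hat X$, $\PP$, and $\PP^{-1}$ are all linear, they hold for arbitrary step functions.

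The hard part, and the only point needing care, is the passage to the closure. Comparing the two isometries on step functions shows $\vvbar h\vvbar=\|\PP^{-1}[h]\|_{L^2(\d\la\hat X\ra)}$ for every step function $h$, so $\PP^{-1}$ is itself an isometry from the dense set of step functions in $\LLambda_T(X)$ into $L^2([0,T],\d\la\hat X\ra)$ and extends continuously to all of $\LLambda_T(X)$. Given $f$ with $\PP^{-1}[f]\in L^2([0,T],\d\la\hat X\ra)$, I would take step functions $f_n\to f$ in $\vvbar\cdot\vvbar$; then $\int_0^T f_n\,\d X\to\int_0^T f\,\d X$ by the first isometry, while $\PP^{-1}[f_n]\to\PP^{-1}[f]$ in $L^2(\d\la\hat X\ra)$ by the isometry of $\PP^{-1}$, whence $\int_0^T\PP^{-1}[f_n]\,\d\hat X\to\int_0^T\PP^{-1}[f]\,\d\hat X$. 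Passing to the limit in the step-function identity yields (\ref{eq:Pminus}), and (\ref{eq:P}) follows identically with the roles of $\PP$ and $\PP^{-1}$, and of $X$ and $\hat X$, interchanged. This also makes transparent why the $L^2(\d\la\hat X\ra)$ hypothesis in the statement is precisely what is required: it guarantees that the right-hand integrals are well defined and that the limiting step is legitimate.
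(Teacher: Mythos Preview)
Your proof is correct and is precisely the natural expansion of what the paper has in mind: the paper does not actually give a proof of this lemma but simply declares it ``obvious'' immediately after Definition \ref{def:p operator}, since $\PP$, $\PP^{-1}$, and both stochastic integrals are all defined by linear-isometric extension from the same indicator generators, on which the identities are exactly the defining relations of prediction-invertibility. Your write-up makes this explicit, including the isometric continuity step needed to pass from step functions to the closure, which is the only point that is not entirely tautological.
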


\begin{rem}
\begin{enumerate}
\item 
Equation (\ref{eq:Pminus}) or (\ref{eq:P}) can actually be taken as the 
definition of the Wiener integral with respect to $X$.
\item
The operators $\PP$ and $\PP^{-1}$ depend on $T$.  
\item
If $p^{-1}(\cdot,s)$ has bounded variation, we can represent $\PP^{-1}$ as
$$
\PP^{-1}[f](s) = f(s)p^{-1}(T,s) +
\int_s^T \left( f(t) - f(s)\right)\, p^{-1}(\d t,s).
$$
Similar formula holds for $\PP$ also, if $p(\cdot,s)$ has bounded variation.
\item Let ${\lla\g\rra}^X(t)$ denote the remaining covariance matrix with
respect to $X$, i.e.,
$$
{\lla\g\rra}^{X}_{ij}(t) =
\E\left[\int_t^T g_i(s)\,\d X_s \, \int_t^T g_j(s)\, \d X_s\right]. 
$$
Let ${\lla \hat \g \rra}^{\hat X}(t)$ denote the remaining covariance matrix
with respect to $\hat X$, i.e.,
$$
{\lla \hat \g \rra}^{\hat X}_{ij}(t) =
\int_t^T \hat g_i(s) \hat g_j(s)\, \d\la \hat X\ra_s. 
$$
Then
$$
{\lla \g \rra}^{X}_{ij}(t) 
=
{\lla \PP^{-1}[\g] \rra}^{\hat X}_{ij}(t) 
=
\int_t^T \PP^{-1}[g_i](s)\PP^{-1}[g_j](s)\, \d\la \hat X\ra_s. 
$$
\end{enumerate}
\end{rem}

Now, let $X^\g$ be the bridge conditioned on $\int_0^T \g(s)\, \d X_s=\0$. 
By Lemma \ref{lmm:P-isometry} we can rewrite the conditioning as
\begin{equation}\label{eq:new_cond}
\int_0^T \g(t) \, \d X_t=\int_0^T \PP^{-1}[\g](t) \, \d \hat{X}(t) = \0,
\end{equation}
With this observation the following theorem, that is the main result of this 
article, follows.

\begin{thm}\label{thm:canonical-pred-inv}
Let $X$ be prediction-invertible Gaussian process.  Assume that, for all
$t\in [0,T]$ and $i=1,\ldots,N$,  $g_i\1_t\in \Lambda_t(X)$. Then the 
generalized bridge $X^\g$ admits the canonical representation
\begin{equation}\label{brg pre inv}
X^\g_t = X_t -
\int_0^t\int_s^t p^{-1}(t,u)\PP\left[\hat\ell_{\hat\g}^*(u,\cdot)\right](s) 
\, \d \la \hat{X}\ra_u \, \d  X_s,
\end{equation}
where
\begin{eqnarray*}
\hat g_i &=& \PP^{-1}[g_i], \\
\hat\ell_{\hat\g}^*(u,v) &=&
|{\lla \hat\g \rra}^{\hat X}|(u)\hat\g^\top(u)
{(\lla \hat\g \rra^{\hat X})}^{-1}(u)\,
\frac{\hat\g(v)}{|\lla \hat\g \rra^{\hat X}|(v)}, \\
\lla \hat\g \rra^{\hat X}_{ij}(t) &=&
\int_t^T \hat g_i(s)\hat g_j(s)\, \d\la \hat X\ra_s 
= \lla \g \rra^{X}_{ij}(t).
\end{eqnarray*}
\end{thm}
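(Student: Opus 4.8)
The plan is to reduce the whole problem to the Gaussian martingale case already settled in Theorem \ref{thm:M-bridge-canonical} by passing to the prediction martingale $\hat X$, and then to transport the resulting martingale bridge back to $X$ through the kernels $p$ and $p^{-1}$. The entry point is the reformulation (\ref{eq:new_cond}) of the conditioning: by Lemma \ref{lmm:P-isometry} the event $\{\int_0^T \g(t)\,\d X_t = \0\}$ is \emph{literally} the event $\{\int_0^T \hat\g(t)\,\d\hat X_t = \0\}$ with $\hat\g = \PP^{-1}[\g]$, so that bridging $X$ on $\g$ is the same as bridging its prediction martingale $\hat X$ on $\hat\g$.

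First I would apply Theorem \ref{thm:M-bridge-canonical} together with Corollary \ref{cor:Mg} to the continuous Gaussian martingale $\hat X$, whose bracket is strictly increasing because $\lla\g\rra(t)=\lla\hat\g\rra^{\hat X}(t)$ is invertible for $t<T$. This produces the canonical martingale bridge $\hat X^{\hat\g}$ with
\[
\d\hat X^{\hat\g}_t = \d\hat X_t - \left(\int_0^t \hat\ell_{\hat\g}^*(t,s)\,\d\hat X_s\right)\d\la\hat X\ra_t,
\]
where $\hat\ell_{\hat\g}^*$ is the resolvent kernel (\ref{eq:lg-resolvent}) formed from the covariance $\lla\hat\g\rra^{\hat X}$ of $\hat X$. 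By Theorem \ref{thm:M-bridge-canonical} this representation is canonical for $\hat X$, i.e.\ $\L_t(\hat X^{\hat\g})=\L_t(\hat X)$ for every $t<T$.

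Next I would set the candidate $X^\g_t := \int_0^t p^{-1}(t,s)\,\d\hat X^{\hat\g}_s$, reconstructing $X^\g$ from $\hat X^{\hat\g}$ by exactly the same prediction-inversion that recovers $X$ from $\hat X$. Substituting the displayed expression for $\d\hat X^{\hat\g}$ and recognizing $\int_0^t p^{-1}(t,s)\,\d\hat X_s = X_t$ gives
\[
X^\g_t = X_t - \int_0^t p^{-1}(t,u)\left(\int_0^u \hat\ell_{\hat\g}^*(u,v)\,\d\hat X_v\right)\d\la\hat X\ra_u.
\]
Using (\ref{eq:P}) of Lemma \ref{lmm:P-isometry} to rewrite the inner integral as $\int_0^u \PP[\hat\ell_{\hat\g}^*(u,\cdot)](s)\,\d X_s$ and then applying Fubini's theorem to interchange the $\d X_s$ and $\d\la\hat X\ra_u$ integrations over the triangle $0\le s\le u\le t$ yields precisely (\ref{brg pre inv}). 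The canonical property of Definition \ref{dfn:canonical} then follows from $\L_t(X)=\L_t(\hat X)$: since $X^\g_t\in\L_t(\hat X^{\hat\g})=\L_t(\hat X)=\L_t(X)$ and, inverting, $\hat X^{\hat\g}_t=\int_0^t p(t,s)\,\d X^\g_s$ gives $\L_t(\hat X^{\hat\g})\subseteq\L_t(X^\g)$, one obtains $\L_t(X^\g)=\L_t(X)$ and hence also $X_t\in\L_t(X^\g)$.

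The main obstacle is justifying the single identity $X^\g_t=\int_0^t p^{-1}(t,s)\,\d\hat X^{\hat\g}_s$, i.e.\ that forming the bridge commutes with the deterministic linear bijection $X\leftrightarrow\hat X$. This is exactly where the Gaussian structure is essential: because $\hat X$ and $X$ generate the same linear spaces and the conditioning events coincide exactly, conditioning (a Hilbert-space projection) is intertwined by the bounded operators $\PP,\PP^{-1}$, so the conditional law of $X$ pushes forward to that of $\hat X$ under $\PP^{-1}$ and back under $\PP$; uniqueness of the bridge measure (Definition \ref{dfn:bridges}) then identifies the two representations. Secondary technical points are the careful handling of the Volterra support of $\hat\ell_{\hat\g}^*(u,\cdot)$ under $\PP$ and the validity of the Fubini interchange for the mixed Wiener--Stieltjes double integral, both of which are controlled by the hypothesis $g_i\1_t\in\Lambda_t(X)$ together with the integrability built into Definition \ref{dfn:pred-inv}.
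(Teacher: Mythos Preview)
Your proposal is correct and follows essentially the same route as the paper: pass to the prediction martingale $\hat X$ via (\ref{eq:new_cond}), apply Theorem \ref{thm:M-bridge-canonical} to get $\d\hat X^{\hat\g}$, reconstruct $X^\g$ through $p^{-1}$, and then rewrite the inner $\d\hat X$-integral in terms of $\d X$ using $\PP$ and Fubini. If anything, you are more explicit than the paper about two points it leaves implicit, namely the verification that the resulting representation is canonical in the sense of Definition \ref{dfn:canonical} and the justification that bridging commutes with the deterministic bijection $X\leftrightarrow\hat X$.
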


\begin{proof}
Since $\hat X$ is a Gaussian martingale and because of the equality 
(\ref{eq:new_cond})
we can use Theorem  \ref{thm:M-bridge-canonical}.  We obtain
$$
\d \hat{X}^{\hat{\g}}_s =
\d \hat{X}_s-\int_0^s \hat\ell_{\hat{\g}}^*(s,u) \, 
\d \hat{X}_u \, \d \la \hat{X}\ra_s.
$$

Now, by using the fact that $X$ is prediction invertible, we can recover $X$
from $\hat{X}$, and consequently also $X^\g$ from $\hat{X}^{\hat\g}$ by
operating with the kernel $p^{-1}$ in the following way:
\begin{eqnarray}
X^\g_t 
&=& 
\int_0^t p^{-1}(t,s)\,  \d \hat{X}^{\hat{\g}}_s \nonumber \\
&=& 
X_t-\int_0^t  p^{-1}(t,s)\left(\int_0^s 
\hat\ell_{\hat{\g}}^*(s,u)\, \d \hat{X}_u\right)
\, \d \la \hat{X}\ra_s. \label{eq:sol}
\end{eqnarray}
In a sense the representation (\ref{eq:sol}) is a canonical representation
already. However, let us write it in terms of $X$  instead of $\hat{X}$. We 
obtain
\begin{eqnarray*}
X^\g_t 
&=&
X_t-\int_0^t   p^{-1}(t,s) \int_0^s 
\PP\left[\hat\ell_{\hat\g}^*(s,\cdot)\right](u) \,\d X_u\,\d\la\hat{X}\ra_s \\
&=&  
X_t-\int_0^t\int_s^t p^{-1}(t,u) 
\PP\left[\hat\ell_{\hat\g}^*(u,\cdot)\right](s) 
\, \d \la \hat{X}\ra_u\, \d  X_s.
\end{eqnarray*}
\end{proof}

\begin{rem}
Recall that, by assumption, the processes $X^\g$ and $X$ are equivalent on
$\F_t$, $t<T$. So, the representation (\ref{brg pre inv}) is an analogue of
the Hitsuda representation for prediction-invertible processes.  Indeed, one
can show, just like in \cite{Sottinen,SottinenTudor}, that a zero mean
Gaussian process $\tilde X$ is equivalent in law to the zero mean 
prediction-invertible Gaussian process $X$ if it admits the representation 
$$
\tilde X_t = X_t - \int_0^t f(t,s) \,\d X_s
$$
where 
$$
f(t,s) = \int_s^t p^{-1}(t,u)\PP\left[v(u,\cdot)\right](s)
\,\d \la \hat X\ra_u
$$
for some Volterra kernel 
$v\in L^2([0,T]^2, \d\la \hat X\ra \otimes \d\la \hat X\ra)$.
\end{rem}


It seems that, except in \cite{GasbarraSottinenValkeila}, the prediction-invertible
Gaussian processes have not been studied at all.  Therefore, we give a class 
of prediction-invertible processes that is related to a class that has been
studied in the literature: the Gaussian Volterra processes.  See, e.g.,
Al\`os et al. \cite{AlosMazetNualart}, for a study of stochastic calculus with
respect to Gaussian Volterra processes.

\begin{dfn}\label{dfn:gvp}
$V$ is an \emph{invertible Gaussian Volterra process}
if it is continuous and there exist Volterra kernels $k$ and $k^{-1}$ such that
\begin{eqnarray}\label{eq:gvp1}
V_t &=& \int_0^t k(t,s)\, \d W_s, \\
W_t &=& \int_0^t k^{-1}(t,s)\, \d V_s. \label{eq:gvp2}
\end{eqnarray} 
Here $W$ is the standard Brownian motion, 
$k(t,\cdot)\in L^2([0,t])=\Lambda_t(W)$ and
$k^{-1}(t,\cdot)\in \Lambda_t(V)$ for all $t\in [0,T]$.
\end{dfn}

\begin{rem}
\begin{enumerate}
\item
The representation (\ref{eq:gvp1}), defining a Gaussian Volterra process,
states that the covariance $R$ of $V$ can be written as
$$
R(t,s) = \int_0^{t\wedge s} k(t,u)k(s,u)\, \d u.
$$
So, in some sense, the kernel $k$ is the square root, or the Cholesky
decomposition, of the covariance $R$.  
\item
The inverse relation (\ref{eq:gvp2}) means that the indicators
$\1_t$, $t\in [0,T]$, can be approximated in $L^2([0,t])$
with linear combinations of the functions $k(t_j,\cdot)$, 
$t_j\in [0,t]$. I.e., the indicators $\1_t$ belong to the image
of the operator $\KK$ extending the kernel $k$ linearly as
discussed below. 
\end{enumerate}
\end{rem}

Precisely as with the kernels $p$ and $p^{-1}$, we can define the operators 
$\KK$ and $\KK^{-1}$ by linearly extending the relations
$$
\KK[\1_t]:=k(t,\cdot) \quad\mbox{and}\quad \KK^{-1}[\1_t]:=k^{-1}(t,\cdot).
$$
Then, just like with the operators $\PP$ and $\PP^{-1}$, we have
\begin{eqnarray*}
\int_0^T f(t)\, \d V_t &=& \int_0^T \KK[f](t)\, \d W_t, \\
\int_0^T g(t)\, \d W_t &=& \int_0^T \KK^{-1}[g](t)\, \d V_t.
\end{eqnarray*}
The connection between the operators $\KK$ and $\KK^{-1}$ and the operators
$\PP$ and $\PP^{-1}$ are
\begin{eqnarray*}
\KK[g] &=& k(T,\cdot)\PP^{-1}[g], \\
\KK^{-1}[g] &=& k^{-1}(T,\cdot)\PP[g].
\end{eqnarray*}
So, invertible Gaussian Volterra processes are prediction-invertible and
the following corollary to Theorem \ref{thm:canonical-pred-inv} is obvious:

\begin{cor}\label{cor:canonical-prev-inv}
Let $V$ be an invertible Gaussian Volterra process and let 
$\KK[g_i]\in L^2([0,T])$ for all $i=1,\ldots,N$. Denote
$$
\tilde \g(t) := \KK[\g](t).
$$
Then the bridge $V^\g$ admits the canonical representation
\begin{equation}\label{eq:bridge-gvp} 
V^\g_t = V_t - 
\int_0^t \int_s^t k(t,u)
\KK^{-1}\left[\tilde\ell_{\tilde\g}^*(u,\cdot)
\right](s)\, \d u \, \d V_s,
\end{equation}
where
\begin{eqnarray*}
\tilde \ell_{\tilde\g}(u,v) &=&
|{\lla \tilde\g \rra}^{W}|(u)\tilde\g^\top(u)
{(\lla \tilde\g \rra^{W})}^{-1}(u)\,
\frac{\tilde\g(v)}{|\lla \tilde\g \rra^{W}|(v)}, \\
\lla \tilde\g \rra^{W}_{ij}(t) &=&
\int_t^T \tilde g_i(s)\tilde g_j(s)\, \d s 
= \lla \g \rra^{V}_{ij}(t).
\end{eqnarray*}
\end{cor}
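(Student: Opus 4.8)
The plan is to run the proof of Theorem~\ref{thm:canonical-pred-inv} with the driving Brownian motion $W$ playing the role of the prediction martingale $\hat X$. This is legitimate because an invertible Gaussian Volterra process $V$ generates the same (augmented) filtration as $W$, so that $\F_t^V=\F_t^W$ for all $t$, and $V$ is recovered from $W$ through the kernel $k$. The advantage of using $W$ rather than $\hat V$ is that $W$ is a standard Brownian motion, so $\d\la W\ra_t=\d t$ and Theorem~\ref{thm:M-bridge-canonical} applies in its simplest form. First I would reduce the conditioning: by the isometry $\int_0^T f(t)\,\d V_t=\int_0^T\KK[f](t)\,\d W_t$ recorded just before the statement, the event $\{\int_0^T\g\,\d V=\0\}$ becomes $\{\int_0^T\tilde\g\,\d W=\0\}$ with $\tilde\g=\KK[\g]\in L^2([0,T])$ by hypothesis.

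Next I would apply Theorem~\ref{thm:M-bridge-canonical} to $W$ with conditioning functions $\tilde\g$. Since the remaining covariance matrix is $\lla\tilde\g\rra^W(t)=\int_t^T\tilde g_i(s)\tilde g_j(s)\,\d s$, the resolvent kernel produced by formula~(\ref{eq:lg-resolvent}) is exactly the $\tilde\ell_{\tilde\g}^*$ of the statement, and the Brownian bridge satisfies
$$
\d W^{\tilde\g}_s=\d W_s-\int_0^s\tilde\ell_{\tilde\g}^*(s,u)\,\d W_u\,\d s.
$$
The Volterra bridge is then obtained by transporting $W^{\tilde\g}$ through $k$, i.e. $V^\g_t=\int_0^t k(t,s)\,\d W^{\tilde\g}_s$; substituting the displayed SDE gives
$$
V^\g_t=V_t-\int_0^t k(t,s)\left(\int_0^s\tilde\ell_{\tilde\g}^*(s,u)\,\d W_u\right)\d s.
$$

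Finally I would convert the correction term back to $\d V$ and rearrange. Rewriting the inner $\d W$-integral via $\int_0^s h(u)\,\d W_u=\int_0^s\KK^{-1}[h](u)\,\d V_u$ turns the bracket into $\int_0^s\KK^{-1}[\tilde\ell_{\tilde\g}^*(s,\cdot)](u)\,\d V_u$, and a Fubini interchange of the outer $\d s$-integral with the $\d V_u$-integral over $\{0\le u\le s\le t\}$, followed by relabelling the dummy variables, yields precisely the stated form~(\ref{eq:bridge-gvp}). That this is a \emph{canonical} representation is then read off directly: the right-hand side of~(\ref{eq:bridge-gvp}) is a $\d V_s$-integral over $[0,t]$, so $V^\g_t\in\L_t(V)$, while invertibility of the Volterra transform gives the reverse inclusion $V_t\in\L_t(V^\g)$.

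The only step that is not pure bookkeeping is the identity $V^\g_t=\int_0^t k(t,s)\,\d W^{\tilde\g}_s$, i.e. the claim that conditioning commutes with the Volterra transform. This is exactly the content of Theorem~\ref{thm:canonical-pred-inv}---in whose proof recovering $X^\g$ from $\hat X^{\hat\g}$ is the same manoeuvre---and rests on the fact that $W$ is a deterministic, measurable, invertible functional of the $V$-path, so that the regular conditional laws transport correctly under the pushforward. Everything downstream of this point is an application of the operators $\KK$, $\KK^{-1}$ together with Fubini.
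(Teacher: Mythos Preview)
Your proposal is correct and follows essentially the same route as the paper. The paper states the corollary as ``obvious'' from Theorem~\ref{thm:canonical-pred-inv} via the identities $\KK[g]=k(T,\cdot)\PP^{-1}[g]$ and $\KK^{-1}[g]=k^{-1}(T,\cdot)\PP[g]$, which amounts to substituting $\hat V$ (with $\d\la\hat V\ra_t=k(T,t)^2\,\d t$) into the prediction-invertible formula; you instead re-run the proof of Theorem~\ref{thm:canonical-pred-inv} with $W$ in place of $\hat V$ and Theorem~\ref{thm:M-bridge-canonical} applied directly to the Brownian motion. Since $\d\hat V_t=k(T,t)\,\d W_t$, the two martingales generate the same filtration and the same conditioning, so the difference is purely cosmetic---your version simply avoids the intermediate translation through $\PP$, $\PP^{-1}$.
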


\begin{exa}\label{exa:fbm}
The \emph{fractional Brownian motion} $B=(B_t)_{t\in[0,T]}$ with Hurst index
$H\in(0,1)$ is a centered Gaussian process with $B_0=0$ and covariance function
$$
R(t,s) = \frac12\left(t^{2H}+s^{2H}-|t-s|^{2H}\right).
$$
Another way of defining the fractional Brownian motion is that it is the 
unique centered Gaussian $H$-self-similar process with stationary increments
normalized so that $\E[B_1^2]=1$.

It is well-known that the fractional Brownian motion is an invertible Gaussian 
Volterra process with
\begin{eqnarray} \label{eq:kk-fbm}
\KK[f](s) &=& c_H 
s^{\frac12-H} \II_{T-}^{H-\frac12}\left[(\,\cdot\,)^{H-\frac12}f\right](s), \\
\KK^{-1}[f](s) &=& \frac{1}{c_H} \label{eq:kk-inverse-fbm}
s^{\frac12-H} \II_{T-}^{\frac12-H}\left[(\,\cdot\,)^{H-\frac12}f\right](s).
\end{eqnarray}
Here $\II_{T-}^{H-\frac12}$ and $\II_{T-}^{\frac12-H}$ are the 
\emph{Riemann-Liouville fractional integrals} over $[0,T]$ of order
$H-\frac12$ and $\frac12-H$, respectively: 
$$
\II_{T-}^{H-\frac12}[f](t) = \left\{\begin{array}{rr}
\frac{1}{\Gamma(H-\frac12)}\int_t^T \frac{f(s)}{(s-t)^{\frac32-H}}\, \d s,
& \mbox{for } H>\frac12,\\
\frac{-1}{\Gamma(\frac32-H)}\frac{\d}{\d t}
\int_t^T \frac{f(s)}{(s-t)^{H-\frac12}}\, \d s, 
&\mbox{for } H<\frac12,
\end{array}\right.
$$
and $c_H$ is the normalizing constant 
$$
c_H=\left( \frac{2H\Gamma(H+\frac12)\Gamma(\frac32-H)}{
\Gamma(2-2H)}\right)^{\frac12}.
$$
Here
$$
\Gamma(x) = \int_0^\infty e^{-t}t^{x-1}\, \d t
$$ 
is the Gamma function.
For the proofs of these facts and for more information on the fractional
Brownian motion we refer to the monographs  by Biagini et al.
\cite{BiaginiHuOksendalZhang} and Mishura \cite{Mishura}.

One can calculate the canonical representation for generalized fractional
Brownian bridges by using the representation \eqref{eq:bridge-gvp} by plugging
in the operators $\KK$ and $\KK^{-1}$ defined by \eqref{eq:kk-fbm} and 
\eqref{eq:kk-inverse-fbm}, respectively.  Unfortunately, even for 
a simple bridge the formula becomes very complicated.  Indeed, consider the 
standard fractional Brownian bridge $B^1$, i.e., the conditioning is 
$g(t) = \1_T(t)$.
Then
$$
\tilde g(t) = \KK[\1_T](t) = k(T,t)
$$
is given by \eqref{eq:kk-fbm}. Consequently,
\begin{eqnarray*}
\lla \tilde g \rra^W(t) &=& 
\int_t^T k(T,s)^2 \, \d s, \\
\tilde\ell_{\tilde g}^*(u,v) &=& 
k(T,u)\frac{k(T,v)}{\int_v^T k(T,w)^2\, \d w}.
\end{eqnarray*}
We obtain the canonical representation for the fractional Brownian bridge:
\begin{eqnarray*}
B^1_t &=& B_t - \int_0^t \int_s^t k(t,u)k(T,u)
\KK^{-1}\left[\frac{k(T,\cdot)}{\int_{\cdot}^T k(T,w)^2\, \d w}\right](s)\, 
\d u\, \d B_s.
\end{eqnarray*}
This representation can be made ``explicit'' by plugging in the definitions 
\eqref{eq:kk-fbm} and \eqref{eq:kk-inverse-fbm}.  It seems, however, very difficult
to simplify the resulting formula.
\end{exa}

\section{Application to Insider Trading}

We consider insider trading in the context of initial enlargement of 
filtrations.  Our approach here is motivated by Amendinger \cite{Amendinger}
and Imkeller \cite{Imkeller}, where only one condition were used.  We extend that 
investigation to multiple conditions although otherwise our investigation is less 
general than in \cite{Amendinger}.

Consider an insider who has at time $t=0$ some insider information of the 
evolution of the price process of a financial asset $S$ over a period $[0,T]$. 
We want to calculate the additional expected utility for the insider trader. 
To make the maximization of the utility of terminal wealth reasonable
we have to assume that our model is arbitrage-free. In our Gaussian
realm this boils down to assuming that the (discounted) asset prices
are governed by the equation  
\begin{equation}\label{eq:model}
\frac{\d S_t}{S_t}  =  a_t\,\d \la M\ra_t + \d M_t,
\end{equation}
where $S_0=1$, $M$ is a continuous Gaussian martingale with strictly 
increasing $\la M\ra$ with $M_0=0$, and the process $a$ is
$\mathbb{F}$-adapted satisfying 
$\int_0^T a_t^2 \, \d \langle M\rangle_t < \infty \; \P$-a.s.

Assuming that the trading ends at time $T-\epsilon$, the insider knows some 
functionals of the return over the interval $[0,T]$. If $\epsilon=0$ there is 
obviously arbitrage for the insider.  The insider information will define a 
collection of  functionals $G_T^i(M) = \int_0^T g_i(t)\, \d M_t$, 
where $g_i \in L^2([0,T],\d\langle M\rangle),\, i=1, \ldots, N$, such that 
\begin{equation}\label{eq:info}
\int_0^T \g(t) \frac{\d S_t}{S_t} = \y = [y_i]_{i=1}^N,
\end{equation}
for some $\y\in \mathbb{R}^N$. This is equivalent to the conditioning of the 
Gaussian martingale $M$ on the  linear functionals $\G_T=[G_T^i]_{i=1}^N$ of 
the log-returns:
\begin{equation*}
\G_T(M) = \int_0^T \g(t)\, \d M_t
= \left[\int_0^T g_i(t)\, \d M_t\right]_{i=1}^N.
\end{equation*} 
Indeed, the connection is
$$
\int_0^T \g(t) \, \d M_t = \y-\lla a,\g\rra=: \y',
$$
where
$$\lla a, \g\rra = [\lla a, g_i\rra]_{i=1}^N
= \left[\int_0^T a_t g_i(t)\, \d \la M\ra_t \right]_{i=1}^N.
$$ 
As the natural filtration $\mathbb{F}$ represents the information available to 
the ordinary trader, the insider trader's information flow is described by a 
larger filtration $\mathbb{G}=(\mathcal{G}_t)_{t\in [0,T]}$ given by
$$
\mathcal{G}_t=\F_t\vee \sigma(G_T^1, \dots, G_T^N).
$$
Under the augmented filtration $\mathbb{G}$, $M$ is no longer a martingale. It 
is a Gaussian semimartingale  with the \emph{semimartingale decomposition}
\begin{equation}\label{mg semimg}
\d M_t = 
\d \Tilde{M}_t+\left(\int_0^t \ell_\g(t,s)\, \d M_s-
\g^\top(t){\lla\g\rra}^{-1}(t)\y'\right)\, \d \la M\ra_t,
\end{equation}
where $\tilde{M}$ is a continuous $\mathbb{G}$-martingale with bracket 
$\langle M\rangle$, and which can be constructed through the formula 
(\ref{eq:can brg 2}).
 
In this market, we consider the portfolio process $\pi$ defined on 
$[0,T-\epsilon]\times\Omega$  as the fraction  of the total wealth invested in 
the asset $S$. So the dynamics of the discounted value process associated to  
a self-financing strategy $\pi$  is defined by $V_0=v_0$ and  
$$
\frac{\d V_t}{V_t}=\pi_t\frac{\d S_t}{S_t} , \quad \text{for } \, 
t\in [0,T-\epsilon],
$$
or equivalently by
\begin{equation}\label{eq:wlth}
V_t=v_0\exp\left(\int_0^t \pi_s \, \d M_s+\int_0^t\left(\pi_s a_s
-\frac12 \pi_s^2\right) \, \d \la M\ra_s\right).
\end{equation}

Let us denote by $\lla \cdot, \cdot \rra_\epsilon$ and 
$\vvbar \cdot \vvbar_\epsilon$ the inner product and the norm on 
$L^2([0,T-\epsilon],\d\langle M\rangle)$. 

For the ordinary trader, the process $\pi$ is assumed to be a non-negative
$\mathbb{F}$-progressively measurable process such that 
\begin{enumerate}
\item $\P[\vvbar \pi \vvbar_\epsilon^2 < \infty]=1$.
\item $\P[\lla \pi,f \rra_\epsilon< \infty]=1$,  for all 
$f \in L^2([0,T-\epsilon],\d\langle M\rangle)$.
\end{enumerate}
 We denote this class of  portfolios by $\Pi(\mathbb{F})$. By analogy, the 
class of the portfolios disposable to the insider trader shall be denoted by 
$\Pi(\mathbb{G})$, the class of  non-negative $\mathbb{G}$-progressively 
measurable processes that satisfy the conditions (i) and (ii) above.

The aim of both investors is to maximize the expected utility of the terminal 
wealth $V_{T-\epsilon}$, by finding an optimal portfolio $\pi$ on 
$[0,T-\epsilon]$ that solves the optimization problem
$$
\max_{\pi}\E\left[U(V_{T-\epsilon})\right].
$$
Here, the utility function $U$ will be the logarithmic utility function, and 
the utility of the process  (\ref{eq:wlth}) valued at time $T-\epsilon$ is  
\begin{eqnarray} \label{log wealth}
\lefteqn{\log V_{T-\epsilon} } \\ \nonumber
&=&
\log v_0 + \int_0^{T-\epsilon} \pi_s \, \d M_s+\int_0^{T-\epsilon}
\left(\pi_s a_s
-\frac12 \pi_s^2\right) \, \d \la M\ra_s \\ \nonumber
&=&
\log v_0 + \int_0^{T-\epsilon} \pi_s \, \d M_s+ \frac12\int_0^{T-\epsilon}
\pi_s\left(2a_s -\pi_s\right) \, \d \la M\ra_s \\ \nonumber
&=&
\log v_0+ \int_0^{T-\epsilon} \pi_s \, \d M_s
+\frac12\lla \pi , 2a-\pi \rra_{\epsilon} 
\end{eqnarray}

From the ordinary trader's point of view $M$ is a martingale. So,
$\E\left(\int_0^{T-\epsilon} \pi_s \, \d M_s\right)=0$ for every
$\pi \in \Pi(\mathbb{F})$ and, consequently,
$$
\E\left[U(V_{T-\epsilon})\right]=\log v_0+
\frac12 \E\left[\lla \pi, 2a-\pi\rra_{\epsilon}\right].
$$ 
Therefore, the ordinary trader, given $\Pi(\mathbb{F})$, will solve the 
optimization problem 
$$
\max_{\pi\in \Pi(\mathbb{F})}\E\left[U(V_{T-\epsilon})\right]=
\log v_0+
\frac12 \max_{\pi\in \Pi(\F)}\E\left[\lla \pi, 2a-\pi\rra_{\epsilon}\right]
$$
over the term $\lla \pi, 2a-\pi\rra_{\epsilon}=2\lla \pi, a\rra_{\epsilon}-
\vvbar \pi \vvbar_{\epsilon}^2$. By using the polarization identity we obtain
$$
\lla \pi, 2a-\pi\rra_{\epsilon}=\vvbar  a \vvbar_{\epsilon}^2-\vvbar \pi
- a \vvbar_{\epsilon}^2 \leq \vvbar  a \vvbar_{\epsilon}^2.
$$
Thus, the maximum is obtained with the choice $\pi_t=a_t$ for $t\in [0,T-\epsilon]$, and maximal expected utility value is
$$
\max_{\pi\in \Pi(\mathbb{F})}\E\left[U(V_{T-\epsilon})\right]=
\log v_0+\frac12\E\left[\vvbar a \vvbar_{\epsilon}^2\right] .
$$

From the insider trader's point of view the  process $M$ is not a martingale 
under his information flow $\mathbb{G}$. The insider can update his 
utility of terminal wealth (\ref{log wealth}) by considering 
(\ref{mg semimg}), where $\tilde{M}$ is a continuous $\mathbb{G}$-martingale. 
This gives
\begin{eqnarray*}
\log V_{T-\epsilon}&=&\log v_0+\int_0^{T-\epsilon} \pi_s \, \d \tilde{M}_s
+\frac12\lla \pi, 2a-\pi\rra_{\epsilon} \\
& &+ \lla \pi, \int_0^{\cdot} \ell_\g(\cdot,t) \, \d M_t - 
\g^\top{\lla \g \rra}^{-1}\y' \rra_{\epsilon}.
\end{eqnarray*}
Now, the insider maximizes the expected utility over all 
$\pi\in \Pi(\mathbb{G})$:
\begin{eqnarray*}
\lefteqn{\max_{\pi\in \Pi(\mathbb{G})}\E\left[ \log V_{T-\epsilon}\right] }\\
&=&\log v_0\\
& &
+\frac12 \max_{\pi\in \Pi(\mathbb{G})}\E \left[ \lla \pi, 2\left(a+ 
\int_0^{\cdot} \ell_\g(\cdot,t)\, \d M_t - \g^\top{\lla\g \rra}^{-1}\y'\right)-
\pi \rra_{\epsilon}\right].
\end{eqnarray*}
The optimal portfolio $\pi$ for the insider trader can be computed in the same
way as for the ordinary trader. We obtain the optimal portfolio
$$
\pi_t=a_t+ \int_0^t \ell_\g(t,s) \, \d M_s - \g^\top(t)
{\lla\g \rra}^{-1}(t)\y', \quad t \in [0,T-\epsilon].
$$

Let us then calculate the additional expected logarithmic utility for the 
insider trader. Since 
$$
\E\left[\lla a, \int_0^{\cdot} \ell_\g(\cdot,s) \, \d M_s- 
\g^\top{\lla \g \rra}^{-1}\y' \rra_{\epsilon} \right]= 0, 
$$
we obtain that 
\begin{eqnarray*}
\Delta_{T-\epsilon} &=& 
\max_{\pi\in \Pi(\mathbb{G})}\E\left[U(V_{T-\epsilon})\right]
-\max_{\pi\in \Pi(\mathbb{F})}\E\left[U(V_{T-\epsilon})\right]\\
&=&
\frac12\E\left[\vvbar  \int_0^{\cdot} \ell_\g(\cdot,s) \, \d M_s-
\g^\top{\lla\g \rra}^{-1}\y' \vvbar_{\epsilon}^2 \right]. 
\end{eqnarray*}
Now, let us use the short-hand notation
\begin{eqnarray*}
\G_t &:=& \int_0^t \g(s)\, \d M_s, \\
\lla\g\rra(t,s) &:=& \lla\g\rra(t)-\lla\g\rra(s), \\
\lla\g\rra^{-1}(t,s) &:=& \lla\g\rra^{-1}(t)-\lla\g\rra^{-1}(s).
\end{eqnarray*}
Then, by expanding the square $\vvbar\cdot\vvbar_\epsilon^2$,
we obtain
\begin{eqnarray*}
2\Delta_{T-\epsilon} &=& 
\E\left[\vvbar  \int_0^{\cdot} \ell_\g(\cdot,s) \, \d M_s
-\g^\top{\lla\g \rra}^{-1}\y' \vvbar_{\epsilon}^2 \right] \\
&=&
\E\left[\vvbar\g^\top{\lla\g\rra}^{-1}\left(\y'+
\G\right)  \vvbar_{\epsilon}^2 \right]\\
&=& 
\E\left[\int_0^{T-\epsilon} \y'^\top {\lla\g\rra}^{-1}(t) 
\g(t)\g^\top(t){\lla\g\rra}^{-1}(t)\y' \, \d \langle M\rangle_t \right] \\
& & 
+\E\left[\int_0^{T-\epsilon} \G_t^\top 
{\lla\g\rra}^{-1}(t)\g(t)\g^\top(t){\lla\g\rra}^{-1}(t)
\G_t \, \d \langle M\rangle_t \right].
\end{eqnarray*}
Now the formula 
$\E[\x^\top\mathbf{A}\x] = \Tr[\mathbf{A}\Cov\x] 
+ \E[\x]^\top\mathbf{A}\E[\x]$ yields
\begin{eqnarray*}
2\Delta_{T-\epsilon}
&=& 
\int_0^{T-\epsilon} \y'^\top {\lla\g\rra}^{-1}(t)\g(t)\g^\top(t)
{\lla\g\rra}^{-1}(t)\y' \, \d \langle M\rangle_t \\
& &
+\int_0^{T-\epsilon} \Tr 
\left[{\lla\g\rra}^{-1}(t)\g(t)\g^\top(t){\lla\g\rra}^{-1}(t)
{\lla\g\rra}(0,t)\right] 
\, \d \langle M\rangle_t\\
&=& 
\y'^\top{\lla\g\rra}^{-1}(T-\epsilon,0)\y'\\
& &+
\int_0^{T-\epsilon} \Tr \left[{\lla\g\rra}^{-1}(t) \g(t)\g^\top(t)
{\lla\g\rra}^{-1}(t){\lla\g\rra}(0)
\right] \, \d \langle M\rangle_t\\
& &-
\int_0^{T-\epsilon} \Tr \left[{\lla\g\rra}^{-1}(t) \g(t)\g^\top(t) 
\right] \, \d \langle M\rangle_t\\
&=&
\left(\y-\lla \g, a\rra\right)^\top 
{\lla\g\rra}^{-1}(T-\epsilon,0)
\left(\y-\lla \g, a\rra\right)\\
& &+
\Tr\left[{\lla\g \rra}^{-1}(T-\epsilon,0)\lla\g\rra(0)\right]\\
& &+
\log \frac{|\lla\g\rra|(T-\epsilon)}{|\lla\g \rra|(0)}.
\end{eqnarray*}
We have proved the following proposition:

\begin{pro}\label{pro:insider}
The additional logarithmic utility in the model (\ref{eq:model}) for
the insider with information (\ref{eq:info}) is
\begin{eqnarray*}
\Delta_{T-\epsilon} &=& 
\max_{\pi\in \Pi(\mathbb{G})}\E\left[U(V_{T-\epsilon})\right]-
\max_{\pi\in \Pi(\mathbb{F})}\E\left[U(V_{T-\epsilon})\right]\\
&=&\frac12 \left(\y-\lla\g, a\rra\right)^\top \left(
{\lla\g \rra}^{-1}(T-\epsilon)-{\lla\g \rra}^{-1}(0)\right)
\left(\y-\lla\g, a\rra\right)\\
& &+\frac12 \Tr\left[\left({\lla\g\rra}^{-1}(T-\epsilon)-{\lla\g\rra}^{-1}(0)
\right)\lla \g \rra(0)\right]\\
& &+\frac12 \log \frac{|\lla \g \rra|(T-\epsilon)}{|\lla \g \rra|(0)}.
\end{eqnarray*}
\end{pro}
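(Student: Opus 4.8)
The plan is to obtain the proposition by assembling the two optimisation problems already solved above and then evaluating in closed form the single expectation that survives. First I would record the two maximal utilities. For the ordinary trader $M$ is an $\mathbb{F}$-martingale, so $\int_0^{T-\epsilon}\pi_s\,\d M_s$ has zero mean and the polarisation identity $\lla\pi,2a-\pi\rra_\epsilon=\vvbar a\vvbar_\epsilon^2-\vvbar\pi-a\vvbar_\epsilon^2$ gives the optimum at $\pi=a$ with value $\log v_0+\tfrac12\E[\vvbar a\vvbar_\epsilon^2]$. For the insider I would substitute the $\mathbb{G}$-semimartingale decomposition (\ref{mg semimg}) into (\ref{log wealth}); since $\tilde M$ is a $\mathbb{G}$-martingale the resulting $\int_0^{T-\epsilon}\pi_s\,\d\tilde M_s$ again has zero mean, and the identical completing-the-square step yields the optimal portfolio $\pi_t=a_t+\int_0^t\ell_\g(t,s)\,\d M_s-\g^\top(t){\lla\g\rra}^{-1}(t)\y'$ and value $\log v_0+\tfrac12\E[\vvbar a+\beta\vvbar_\epsilon^2]$, where $\beta_t:=\int_0^t\ell_\g(t,s)\,\d M_s-\g^\top(t){\lla\g\rra}^{-1}(t)\y'$ is the information drift.

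Subtracting the two values gives $\Delta_{T-\epsilon}=\E[\lla a,\beta\rra_\epsilon]+\tfrac12\E[\vvbar\beta\vvbar_\epsilon^2]$; the first term vanishes, leaving $\Delta_{T-\epsilon}=\tfrac12\E[\vvbar\beta\vvbar_\epsilon^2]$. To expand it I would use the factorisation (\ref{eq:lg}), which gives $\int_0^t\ell_\g(t,s)\,\d M_s=-\g^\top(t){\lla\g\rra}^{-1}(t)\G_t$ with $\G_t:=\int_0^t\g(s)\,\d M_s$, so that $\beta_t=-\g^\top(t){\lla\g\rra}^{-1}(t)(\G_t+\y')$ and
\[
\vvbar\beta\vvbar_\epsilon^2=\int_0^{T-\epsilon}(\y'+\G_t)^\top A(t)(\y'+\G_t)\,\d\la M\ra_t,\qquad A(t):={\lla\g\rra}^{-1}(t)\g(t)\g^\top(t){\lla\g\rra}^{-1}(t).
\]
Since $\E[\G_t]=\0$ and $\Cov[\G_t]={\lla\g\rra}(0)-{\lla\g\rra}(t)$ under $\P$, the identity $\E[\x^\top A\x]=\Tr[A\,\Cov\x]+\E[\x]^\top A\,\E[\x]$ splits $2\Delta_{T-\epsilon}$ into $\int_0^{T-\epsilon}\y'^\top A(t)\y'\,\d\la M\ra_t$ plus $\int_0^{T-\epsilon}\Tr[A(t)({\lla\g\rra}(0)-{\lla\g\rra}(t))]\,\d\la M\ra_t$.

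The crux is the closed-form evaluation of these $\d\la M\ra$-integrals, which I would carry out with two matrix-calculus identities stemming from $\d{\lla\g\rra}(t)=-\g(t)\g^\top(t)\,\d\la M\ra_t$. Differentiating the inverse, $\d({\lla\g\rra}^{-1}(t))=-{\lla\g\rra}^{-1}(t)\,(\d{\lla\g\rra}(t))\,{\lla\g\rra}^{-1}(t)=A(t)\,\d\la M\ra_t$, so $A(t)\,\d\la M\ra_t$ is an exact differential; this integrates the $\y'$-term to $\y'^\top({\lla\g\rra}^{-1}(T-\epsilon)-{\lla\g\rra}^{-1}(0))\y'$ and the $\Tr[A(t){\lla\g\rra}(0)]$ piece to $\Tr[({\lla\g\rra}^{-1}(T-\epsilon)-{\lla\g\rra}^{-1}(0)){\lla\g\rra}(0)]$. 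For the remaining piece I would cancel one factor, $\Tr[A(t){\lla\g\rra}(t)]=\g^\top(t){\lla\g\rra}^{-1}(t)\g(t)$, and apply Jacobi's formula $\d\log|{\lla\g\rra}|(t)=\Tr[{\lla\g\rra}^{-1}(t)\,\d{\lla\g\rra}(t)]=-\g^\top(t){\lla\g\rra}^{-1}(t)\g(t)\,\d\la M\ra_t$, which integrates $-\int_0^{T-\epsilon}\Tr[A(t){\lla\g\rra}(t)]\,\d\la M\ra_t$ to $\log(|{\lla\g\rra}|(T-\epsilon)/|{\lla\g\rra}|(0))$. Substituting $\y'=\y-\lla\g,a\rra$ and halving yields the three stated terms.

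The main obstacle is not this algebra but the probabilistic bookkeeping beneath it: the vanishing of both $\E[\int_0^{T-\epsilon}\pi_s\,\d\tilde M_s]$ and the cross term $\E[\lla a,\beta\rra_\epsilon]$, together with the use of the unconditional mean and covariance of $\G_t$ while the signal value $\y'$ is treated as fixed. All of this hinges on (\ref{mg semimg}) genuinely exhibiting $\tilde M$ as a continuous $\mathbb{G}$-martingale with bracket $\la M\ra$ under the initial enlargement, and on checking that the optimal portfolios belong to $\Pi(\mathbb{G})$ so that the completing-the-square argument and the interchange of $\E$ with the $\d\la M\ra$-integrals are licit. Once the decomposition (\ref{mg semimg}) and admissibility are in hand, what remains is exactly the deterministic matrix calculus above.
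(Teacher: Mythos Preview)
Your proof is correct and follows essentially the same route as the paper: compute the two optimal utilities by completing the square, observe that the cross term $\E[\lla a,\beta\rra_\epsilon]$ vanishes so that $\Delta_{T-\epsilon}=\tfrac12\E[\vvbar\beta\vvbar_\epsilon^2]$, apply the quadratic-form identity $\E[\x^\top A\x]=\Tr[A\,\Cov\x]+\E[\x]^\top A\,\E[\x]$ with $\Cov[\G_t]={\lla\g\rra}(0)-{\lla\g\rra}(t)$, and then integrate in closed form using $\d{\lla\g\rra}(t)=-\g(t)\g^\top(t)\,\d\la M\ra_t$. If anything, your write-up is more explicit than the paper's, since you name the two antiderivative identities (derivative of the inverse and Jacobi's formula for $\log|\cdot|$) that the paper uses silently in passing from the $\d\la M\ra$-integrals to the final three terms.
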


\begin{exa}
Consider the classical Black and Scholes pricing model:
$$
\frac{\d S_t}{S_t}=\mu \d t + \sigma \d W_t, \qquad S_0=1,
$$
where $W=\left(W_t\right)_{t\in[0,T]}$ is the standard Brownian motion.
Assume that the insider trader knows at time $t=0$ that the total and the 
average return of the stock price over the period $[0,T]$ are both zeros
and that the trading ends at time $T-\epsilon$. 
So, the insider knows that
\begin{eqnarray*}
G_T^1 &=& \int_0^T g_1(t)\, \d W_t=\frac{y_1}{\sigma}-\frac{\mu}{\sigma}
\lla g_1,\1_T \rra=-\frac{\mu}{\sigma}\lla g_1,\1_T \rra \\
G_T^2 &=& \int_0^T g_2(t)\, \d W_t=\frac{y_2}{\sigma}-\frac{\mu}{\sigma}
\lla g_2,\1_T \rra=-\frac{\mu}{\sigma}\lla g_2,\1_T \rra,
\end{eqnarray*}
where
\begin{eqnarray*}
g_1(t) &=& \1_T(t), \\
g_2(t) &=& \frac{T-t}{T}. 
\end{eqnarray*}
Then, by Proposition \ref{pro:insider},
\begin{eqnarray*}
\Delta_{T-\epsilon} &=&\frac12\left(\frac{\mu}{\sigma}\right)^2
\lla\g,\1_T \rra^\top 
\left({\lla\g \rra}^{-1}(T-\epsilon)-{\lla\g\rra}^{-1}(0)\right)
\lla\g,\1_T \rra\\
& &
+\frac12 \Tr\left[
\left({\lla\g\rra}^{-1}(T-\epsilon)-{\lla\g\rra}^{-1}(0)\right)\lla\g\rra(0)
\right]\\
& &+\frac12 \log \frac{|\lla\g \rra|(T-\epsilon)}{|\lla\g \rra|(0)},
\end{eqnarray*}
with 
$$
{\lla\g\rra}^{-1}(t)
=
\left[\begin{array}{rr}
\frac{4}{T}\left(\frac{T}{T-t}\right) &  
-\frac{6}{T}\left(\frac{T}{T-t}\right)^2\\
-\frac{6}{T}\left(\frac{T}{T-t}\right)^2 & 
\frac{12}{T}\left(\frac{T}{T-t}\right)^3
\end{array}\right]
$$
for all $t \in [0,T-\epsilon]$. We obtain
\begin{eqnarray*}
\Delta_{T-\epsilon}&=& 
\frac12\left(\frac{\mu}{\sigma}\right)^2\left\{3T\left(\frac{T}{\epsilon}
\right)^3-6T\left(\frac{T}{\epsilon}\right)^2+4T\left(\frac{T}{\epsilon}
\right)-T\right\}\\
& &+2\left(\frac{T}{\epsilon}\right)^3-3\left(\frac{T}{\epsilon}\right)^2 +2
\left(\frac{T}{\epsilon}\right)-2\log\left(\frac{T}{\epsilon}\right)-1.
\end{eqnarray*}
Here it can be nicely seen that $\Delta_0=0$ (no trading at all) and 
$\Delta_T=\infty$ (the knowledge of the final values implies arbitrage). 
\end{exa}


\end{document}